\newtheorem{theorem}{Theorem}[section]
\newtheorem{thm}[theorem]{Theorem}
\newtheorem*{thmA*}{Theorem A}
\newtheorem*{thmB*}{Theorem B}
\newtheorem*{thmC*}{Theorem C}
\newtheorem*{thmD*}{Theorem D}
\newtheorem*{thmE*}{Theorem E}
\newtheorem{lem}[theorem]{Lemma}
\newtheorem{pro}[theorem]{Proposition}
\newtheorem{cor}[theorem]{Corollary}
\newcommand{\N}{\mathbb{N}}
\newcommand{\hyp}{Z_{\infty}}
\title[Nilpotent Graph]{Neighborhoods, connectivity, and diameter of the nilpotent graph of a finite group}
\author[Delizia, Gaeta, Lewis, and Monetta]{Costantino Delizia, Michele Gaeta, Mark L. Lewis, Carmine Monetta }
\address{Dipartimento di Matematica, Universit\`a di Salerno, Fisciano 84084 (SA), Italy}
\email{(Delizia) cdelizia@unisa.it, (Gaeta) migaeta@unisa.it, (Monetta) cmonetta@unisa.it}
\address{Department of Mathematical Sciences,
Kent State University, Kent, OH  44242, U.S.A.}
\email{(Lewis) lewis@math.kent.edu}
\begin{document}

\begin{abstract}
The nilpotent graph of a group $G$ is the simple and undirected graph whose vertices are the elements of $G$ and two distinct vertices are adjacent if they generate a nilpotent subgroup of $G$. Here we discuss some topological properties of the nilpotent graph of a finite group $G$. Indeed, we characterize finite solvable groups whose closed neighborhoods are nilpotent subgroups. Moreover, we study the connectivity of the graph $\Gamma(G)$ obtained removing all universal vertices from the nilpotent graph of $G$. Some upper bounds to the diameter of $\Gamma(G)$ are provided when $G$ belongs to some classes of groups.
\vspace{0,3cm} 

\noindent {\bf Mathematics Subject Classification.} 	Primary: 20D15  Secondary: 05C12, 05C25, 20D10.

\end{abstract}

\maketitle

\section{Introduction}
\noindent In this paper, all groups are finite.  The study of graphs associated with groups has become an increasingly important area of research in modern algebra.  In this paper, we focus on groups and the graphs that can be constructed from them. For a more comprehensive overview of this area and related open problems, we refer the reader to \cites{BLN, Cameron, GLM, GM, Shahverdi}.

One particularly significant graph is the {\it nilpotent graph} of a group $G$, which is the simple undirected graph where the vertices correspond to the elements of $G$, and two distinct vertices $x$ and $y$ are adjacent if and only if the subgroup $\langle x, y\rangle$ generated by $x$ and $y$ is nilpotent.  The concept of the nilpotent graph first emerged implicitly in the series of papers \cites{DMN, DMN2, DMN3}, where the authors examine groups whose nilpotent graphs consist of complete connected components.  Later, in \cite{AZ}, the authors studied the properties of the complement of the nilpotent graph.  More recently, some specific properties of this graph have been explored in \cite{torres}.

The nilpotent graph is part of a broader family of graphs that have been studied in the context of finite groups. For instance, recent work has focused on the properties of neighborhoods within these graphs (see \cites{ADM, ALMM, LM}).  For each element $x \in G$, the nilpotent neighborhood of $x$, denoted by ${\rm Nil}_G(x)$, is the set of elements $y \in G$ such that the subgroup $\langle x, y\rangle$ is nilpotent.  Understanding the structure of these neighborhoods is a crucial step in classifying groups based on their nilpotent graph.  As shown in \cite{AZ}, the subset ${\rm Nil}_G(x)$ is not a subgroup in general.  A still open question is the classification of groups for which ${\rm Nil}_G(x)$ is a subgroup of $G$ for every $x \in G$.  Such groups, which following \cite{AZ} we call {\it $\mathfrak n$-groups}, exhibit special properties in relation to their nilpotent subgroups.  While the classification of simple $\mathfrak n$-groups has been achieved in earlier works, such as \cite{AZ}, the study of solvable $\mathfrak n$-groups remains a relatively unexplored area.

In this paper, we investigate solvable $\mathfrak n$-groups, establishing a full classification of those $\mathfrak n$-groups for which every nilpotent neighborhood is a nilpotent subgroup: indeed we prove that the class of Frobenius groups having a nilpotent Frobenius complement is the only class of solvable groups for which this condition holds.

\begin{thmA*}\label{introthm:nilp}
Let $G$ be a solvable group with trivial center. Then ${\rm Nil}_G(x)$ is a nilpotent subgroup of $G$ for every nontrivial element $x \in G$ if and only if $G$ is a Frobenius group with nilpotent Frobenius complement.
\end{thmA*}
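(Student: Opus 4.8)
The plan is to reinterpret the hypothesis combinatorially and then extract the Frobenius structure, using the same reformulation for both implications. First I would record the elementary but crucial observation that, for a nontrivial $x$, the set ${\rm Nil}_G(x)$ is a nilpotent subgroup if and only if it is the \emph{unique} maximal nilpotent subgroup of $G$ containing $x$: whenever ${\rm Nil}_G(x)$ is a nilpotent subgroup it is itself nilpotent and contains every nilpotent subgroup through $x$ (if $M\ni x$ is nilpotent, then $\langle x,m\rangle\le M$ is nilpotent for all $m\in M$, so $M\subseteq {\rm Nil}_G(x)$). Consequently the standing hypothesis is equivalent to saying that any two distinct maximal nilpotent subgroups of $G$ intersect trivially, i.e. the maximal nilpotent subgroups form a partition of $G\setminus\{1\}$.

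For the forward implication, assume this partition property together with $Z(G)=1$. Since a nontrivial nilpotent group has nontrivial center, $G$ is not nilpotent, so the partition is proper. Let $F=F(G)$; as $G$ is solvable, $F\ne 1$ and $C_G(F)\le F$. Being nilpotent, $F$ lies in some maximal nilpotent subgroup, and by trivial intersection it lies in a unique one, say $M_0$. Normality of $F$ gives $F=F^g\le M_0^g$ for all $g$, so $M_0^g=M_0$ by uniqueness; thus $M_0\trianglelefteq G$, whence $M_0\le F$ and $F=M_0$ is a normal maximal nilpotent subgroup. Now for every $1\ne f\in F$ we have ${\rm Nil}_G(f)=F$, and since $C_G(f)\subseteq {\rm Nil}_G(f)$ this yields $C_G(f)\le F$. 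With $1<F\trianglelefteq G$, this is exactly the condition that $G$ is a Frobenius group with kernel $F$.

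It then remains to prove that a Frobenius complement $H$ of $G$ is nilpotent, which I expect to be the main obstacle, since non-nilpotent Frobenius complements exist (e.g. $SL(2,3)$) and must be excluded using the partition property. I would first check that the maximal nilpotent subgroups of $G$ contained in $H$ are precisely the maximal nilpotent subgroups of $H$ (a maximal nilpotent subgroup of $H$ meets $F$ trivially, hence lies in a complement, and any nilpotent overgroup in $G$ also meets $F$ trivially and so lies in the same complement $H$), so $H$ inherits the partition property. If $|H|$ is even, then as a Frobenius complement $H$ has a unique involution $z$, which is central; hence $z\in C_H(h)\subseteq {\rm Nil}_G(h)$ lies in every maximal nilpotent subgroup of $H$, and trivial intersection forces a single such subgroup, so $H$ is nilpotent. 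If $|H|$ is odd, all Sylow subgroups of the Frobenius complement $H$ are cyclic, so $H$ is a $Z$-group; a noncyclic $Z$-group contains a nonabelian subgroup of order $pq$, impossible in a Frobenius complement, so $H$ is cyclic. In both cases $H$ is nilpotent.

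For the converse, let $G$ be a Frobenius group with kernel $K$ and nilpotent complement $H$. By Thompson's theorem $K$ is nilpotent, and a fixed-point-free action argument shows that $K$ and each conjugate $H^g$ are maximal nilpotent: a nontrivial central element of a nilpotent subgroup meeting $K$ nontrivially would be centralized by an element of a complement, contradicting the Frobenius condition, while any nilpotent subgroup meeting $K$ trivially is contained in a single complement. Together with the Frobenius partition $G\setminus\{1\}=(K\setminus\{1\})\sqcup\bigsqcup_g(H^g\setminus\{1\})$, this identifies $K$ and the $H^g$ as exactly the maximal nilpotent subgroups, pairwise meeting trivially. Hence for each nontrivial $x$, the subgroup $\langle x,y\rangle$ is nilpotent precisely when $y$ lies in the unique partition member $P$ containing $x$, so ${\rm Nil}_G(x)=P$ is a nilpotent subgroup, as required.
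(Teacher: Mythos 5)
Your reformulation of the hypothesis (that ${\rm Nil}_G(x)$ is a nilpotent subgroup for all $x\neq 1$ if and only if the maximal nilpotent subgroups of $G$ pairwise intersect trivially) is correct, and it leads you along a genuinely different path from the paper, which instead picks $x_p\in O_p(G)\setminus\{1\}$, deduces that $O^{p'}(G)\leq {\rm Nil}_G(x_p)$ is a normal Sylow $p$-subgroup, identifies the resulting normal Hall subgroup with ${\rm Fit}(G)$, and applies Schur--Zassenhaus. Your extraction of the Frobenius structure (the Fitting subgroup is the unique maximal nilpotent subgroup containing itself, hence normal and self-centralizing elementwise) and your converse via the partition $G\setminus\{1\}=(K\setminus\{1\})\sqcup\bigsqcup_g (H^g\setminus\{1\})$ are both sound; indeed you treat the nilpotency of the complement and the converse explicitly, which the paper's printed proof does not.

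There is, however, a genuine gap in your odd-order case. The claim that a noncyclic $Z$-group contains a nonabelian subgroup of order $pq$ is false: take $H=C_7\rtimes C_9$ where the generator of $C_9$ acts with order $3$. Every element of order $3$ in this group is central, so every subgroup of order $21$ is cyclic, yet $H$ is a noncyclic (indeed non-nilpotent) $Z$-group; worse for your argument, this $H$ \emph{is} a genuine Frobenius complement (it admits a fixed-point-free $3$-dimensional representation, the relevant criterion being that every prime dividing the order $d$ of the action divides $n/d$). So the structure of odd-order Frobenius complements alone cannot give you cyclicity or nilpotency, and you must use the inherited partition property here, exactly as you did in the even case. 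The repair is short: every Frobenius complement has nontrivial center --- for even order this is the unique involution you already used, and for odd order $H$ is metacyclic, $H=\langle a\rangle\rtimes\langle b\rangle$ with $d=|H:C_H(a)\cap\langle b\rangle\cdot\langle a\rangle|$ hidden in the relation $a^b=a^r$ of order $d<o(b)$, so $b^d$ is a nontrivial central element. Since $Z(H)M$ is nilpotent for any maximal nilpotent subgroup $M$ of $H$, maximality gives $Z(H)\leq M$, and the trivial-intersection property then forces a unique maximal nilpotent subgroup, i.e., $H$ nilpotent. With that substitution your proof is complete.
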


It is worth mentioning that the hypothesis of Theorem A is quite natural in the context of graph theoretic problems.  Indeed, having a  nilpotent group for ${\rm Nil}_G (x)$ coincides with the requirement that every closed neighborhood of the graph is a clique. 

Another interesting graph property is connectivity.  The study of connectivity in group-related graphs has a long history (see \cites{CLSTU, CLSTU1, MP, parker}).  The first step is to detect the set of universal vertices of the graph; namely, those vertices that are adjacent to all of the other vertices of the graph.

When $G$ is any group, the set of universal vertices in the nilpotent graph for $G$ coincides with the hypercenter $\hyp(G)$ of $G$ (see Proposition 2.1 in \cite{AZ}).  Thus, we consider the graph $\Gamma (G)$, which is defined as the induced subgraph of the nilpotent graph on the set $G \setminus \hyp(G)$.  First, we show that this graph can be computed via $G/\hyp(G)$.

\begin{thmB*}
For any group $G$ the number of connected components of $\Gamma(G)$ equals the number of connected components of $\Gamma(G/\hyp(G))$, and there is a correspondence between the connected components of $\Gamma(G)$ and $\Gamma(G/\hyp(G))$ that maps connected components of diameter $1$ to connected components of diameter $0$ or $1$ and preserves the diameter of connected components whose diameter is greater than $1$. 
\end{thmB*}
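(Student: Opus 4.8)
The plan is to reduce the entire statement to a single group-theoretic lemma followed by an elementary analysis of a graph blow-up. Write $\bar G = G/\hyp(G)$ and let $\pi\colon G \to \bar G$ be the quotient map. Recall that $\hyp(\bar G)$ is trivial, so by the cited description of universal vertices the graph $\Gamma(\bar G)$ is just the nilpotent graph on $\bar G \setminus \{1\}$, while $\Gamma(G)$ is the nilpotent graph on $G \setminus \hyp(G)$. The restriction of $\pi$ therefore gives a surjection from the vertex set of $\Gamma(G)$ onto that of $\Gamma(\bar G)$, all of whose fibers have size $m := |\hyp(G)|$.

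The heart of the argument is the following claim: for a subgroup $H \le G$, the group $H$ is nilpotent if and only if its image $\pi(H) = H\hyp(G)/\hyp(G)$ is nilpotent. The forward direction is immediate, since homomorphic images of nilpotent groups are nilpotent. For the converse I would first establish the standard containment $\Zen_i(G)\cap H \le \Zen_i(H)$ for every $i$ by induction on $i$: the base case is clear, and the inductive step uses that $[\,\Zen_{i+1}(G)\cap H,\,H\,] \le \Zen_i(G)\cap H$. Taking the union over $i$ yields $\hyp(G)\cap H \le \hyp(H)$. Now if $\pi(H) \cong H/(H\cap \hyp(G))$ is nilpotent, then $H/\hyp(H)$, being a quotient of it, is nilpotent; but $H/\hyp(H)$ has trivial hypercenter, and a nilpotent group equals its own hypercenter, so $H/\hyp(H)$ is trivial and $H$ is nilpotent.

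Applying this with $H = \langle x,y\rangle$ and using $\pi(\langle x,y\rangle) = \langle \pi(x),\pi(y)\rangle$ shows that two vertices $x,y$ of $\Gamma(G)$ lying in distinct fibers are adjacent if and only if $\pi(x)$ and $\pi(y)$ are adjacent in $\Gamma(\bar G)$. Moreover, any two distinct vertices in the same fiber are adjacent: if $\pi(x)=\pi(y)$ then $y = xz$ with $z \in \hyp(G)$, so $\langle x,y\rangle = \langle x,z\rangle$ is nilpotent because $z$ is a universal vertex of the nilpotent graph. Consequently $\Gamma(G)$ is exactly the blow-up of $\Gamma(\bar G)$ obtained by replacing each vertex with a clique of size $m$ and joining two such cliques completely precisely when the corresponding vertices of $\Gamma(\bar G)$ are adjacent.

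It then remains to read off the topology of a clique blow-up, which is routine. The union of the fibers over a connected component of $\Gamma(\bar G)$ is connected, while a path in $\Gamma(G)$ projects (after deleting trivial steps within a fiber) to a walk in $\Gamma(\bar G)$; hence $\pi$ induces a bijection between the connected components of the two graphs, giving the equality of their number. For the diameters, one checks that the distance in $\Gamma(G)$ between two vertices in distinct fibers over $\bar u \neq \bar v$ equals $d_{\Gamma(\bar G)}(\bar u,\bar v)$ (a geodesic upstairs projects to a walk of no greater length, and any geodesic downstairs lifts), whereas two distinct vertices in a common fiber are at distance $1$. Thus, for a component $C$ lying over $\bar C$, one gets $\mathrm{diam}(C) = \max\{1,\mathrm{diam}(\bar C)\}$ when $m \ge 2$ and $\mathrm{diam}(C) = \mathrm{diam}(\bar C)$ when $m = 1$; in either case $\mathrm{diam}(C) = 1$ forces $\mathrm{diam}(\bar C) \in \{0,1\}$, and $\mathrm{diam}(C) > 1$ forces $\mathrm{diam}(\bar C) = \mathrm{diam}(C)$, which is exactly the asserted correspondence. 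The only genuine obstacle is the converse direction of the key lemma; once the hypercenter intersection property is in hand, everything else is formal.
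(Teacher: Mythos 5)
Your proof is correct and follows essentially the same route as the paper: both hinge on the lemma that a subgroup $H\le G$ is nilpotent exactly when its image modulo $\hyp(G)$ is (the paper's Proposition 3.1, proved there by pushing the lower central series of $H$ into $[\hyp(G),G,\dots,G]=1$, whereas you use $\hyp(G)\cap H\le \hyp(H)$ via the upper central series), and then both deduce the component and diameter statements by comparing paths in $\Gamma(G)$ and $\Gamma(G/\hyp(G))$. Your packaging of $\Gamma(G)$ as a clique blow-up of $\Gamma(G/\hyp(G))$ is a slightly cleaner way to organize the paper's path-shortening argument, but the mathematical content is the same.
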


For solvable groups, we show that $\Gamma(G)$ is disconnected exactly when $G/\hyp(G)$ is Frobenius or $2$-Frobenius. 

\begin{thmC*}\label{introthm:disconnected}
Let $G$ be a non-nilpotent solvable group. Then $\Gamma (G)$ is disconnected if and only if $G/\hyp(G)$ is a Frobenius group or a $2$-Frobenius group.
\end{thmC*}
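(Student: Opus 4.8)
The plan is to prove Theorem C by reducing to the quotient $\bar G = G/\hyp(G)$ via Theorem B, and then characterizing disconnectedness of $\Gamma(\bar G)$ for a group with trivial hypercenter. Since $\hyp(G/\hyp(G))$ is trivial, and by Theorem B the number of connected components of $\Gamma(G)$ equals that of $\Gamma(\bar G)$, the graph $\Gamma(G)$ is disconnected if and only if $\Gamma(\bar G)$ is. So from the outset I would replace $G$ by $\bar G$ and assume $\hyp(G) = 1$; the goal becomes showing that, for a non-nilpotent solvable group $G$ with trivial hypercenter, $\Gamma(G)$ is disconnected if and only if $G$ is Frobenius or $2$-Frobenius.

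For the ``if'' direction I would argue directly from the structure of Frobenius and $2$-Frobenius groups. If $G = K \rtimes H$ is Frobenius with kernel $K$ and complement $H$, the key facts are that any two elements generating a nilpotent subgroup must lie in a common nilpotent subgroup, and that nilpotent subgroups of a Frobenius group are contained either in the kernel or in a single conjugate of the complement (since a nontrivial element of $H$ commutes with no nontrivial element of $K$, no nilpotent subgroup can meet both nontrivially). This should let me partition $G \setminus \hyp(G)$ and show there is no edge between a vertex in $K$ and a vertex in (a conjugate of) $H$, producing at least two components. The $2$-Frobenius case $G = A \rtimes B \rtimes C$ (where $AB$ and $B$ are both normal with $AB$ and $G/A$ Frobenius) requires tracking which elements can generate nilpotent subgroups across the layers; I would use the Frobenius structure of the quotient $G/A$ together with the nilpotency of the relevant sections to exhibit the separation.

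The ``only if'' direction is the harder half and will be the main obstacle. Here I assume $\Gamma(G)$ is disconnected (with $\hyp(G)=1$, $G$ non-nilpotent solvable) and must force $G$ to be Frobenius or $2$-Frobenius. The natural strategy is to look at a minimal normal subgroup $N$ of $G$, which is elementary abelian since $G$ is solvable, and analyze the action of $G$ on $N$ and the placement of vertices. Disconnectedness means the vertices split so that no nilpotent subgroup bridges the parts; I would want to show that every nontrivial element outside $N$ acts fixed-point-freely on $N$ (or on some canonical section), since fixed points create commuting pairs and hence edges that could reconnect the graph. Establishing fixed-point-free action is precisely the hypothesis that yields Frobenius structure (via Thompson's theorem the kernel is nilpotent), so the crux is translating ``no connecting edge'' into ``no nontrivial common fixed points,'' and then bootstrapping: if the complement itself fails to be self-normalizing in a malnormal way, one is pushed into the $2$-Frobenius configuration. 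I expect to lean on the classification of solvable groups with disconnected prime/commuting-type graphs (the Gruenberg–Kegel philosophy that disconnected solvable groups are Frobenius or $2$-Frobenius) as a template, adapting it to the nilpotent graph.

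Concretely, I would organize the only-if argument as follows. First, reduce using Theorem B to $\hyp(G)=1$ so that $G$ has a nontrivial Fitting subgroup $F = F(G)$ with $C_G(F) \le F$. Second, show that the connected components are unions of cosets or of conjugacy classes of suitable subgroups, and identify the component containing $F(G)$; the obstruction here is controlling elements whose order mixes primes across the Fitting subgroup and a complement. Third, deduce fixed-point-free action of a complement to $F$ on $F$, giving a Frobenius group when $G = F \rtimes H$ splits and $H$ acts freely; when the action is not free on all of $F$ but only on an appropriate top section, I would iterate one level down to recover the $2$-Frobenius decomposition. The delicate point throughout is that the nilpotent graph's edges depend on generating a nilpotent subgroup rather than merely on commuting, so I must repeatedly use that in a solvable group two elements generate a nilpotent subgroup precisely when, for each prime $p$, their $p$-parts have some controlled interaction — and ruling out such interactions across components is exactly what forces the Frobenius or $2$-Frobenius shape. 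I anticipate the hardest single step is eliminating intermediate configurations, i.e.\ proving that partial fixed-point freeness cannot occur without the full Frobenius or two-layered $2$-Frobenius structure.
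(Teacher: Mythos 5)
Your reduction to $G/\hyp(G)$ via Theorem~B and your direct argument for the ``if'' direction (no edge between the kernel and the complements of a Frobenius group, iterated for the $2$-Frobenius case) match the paper's Proposition on Frobenius and $2$-Frobenius groups and are sound. The genuine gap is in the ``only if'' direction, which you describe as a plan rather than a proof: you propose to adapt the Gruenberg--Kegel/Parker classification to the nilpotent graph from scratch (minimal normal subgroups, forcing fixed-point-free actions, Thompson's theorem, ``eliminating intermediate configurations''), and you explicitly leave the hardest steps unresolved. None of that machinery is actually needed, and carrying it out would amount to reproving Parker's theorem in a harder setting.

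The missing idea is a one-line reduction to the commuting graph. When $Z(G)=1$, the graphs $\Gamma(G)$ and $\Gamma_{\rm comm}(G)$ have the same vertex set and $\Gamma_{\rm comm}(G)$ is a subgraph of $\Gamma(G)$; conversely, if $x$ and $y$ are adjacent in $\Gamma(G)$, then $\langle x,y\rangle$ is a nontrivial nilpotent group, so it has a nontrivial central element $z$, and $x$--$z$--$y$ is a path in $\Gamma_{\rm comm}(G)$. Subdividing every edge of a path in $\Gamma(G)$ this way shows that $\Gamma(G)$ is connected if and only if $\Gamma_{\rm comm}(G)$ is connected. One may then quote Parker's theorem verbatim: for a solvable group with trivial center, $\Gamma_{\rm comm}(G)$ is disconnected if and only if $G$ is Frobenius or $2$-Frobenius. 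This is exactly how the paper closes the ``only if'' direction; without this observation (or a complete execution of your proposed structural analysis, which you have not supplied), your argument does not establish the theorem.
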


We see that the structure of the quotient group $G/\hyp(G)$ plays a pivotal role in determining the connectivity of $\Gamma(G)$.   In fact, we obtain this result starting from the relationship between $\Gamma(G)$ and the commuting graph $\Gamma_{\rm comm}(G)$, which is another widely studied graph associated with finite groups.  In $\Gamma_{\rm comm} (G)$ the vertices are the non-central elements of $G$, and two distinct vertices are adjacent if and only if the corresponding elements commute.  The graphs $\Gamma (G)$ and $\Gamma_{\rm comm} (G)$ share the same vertex set if and only if the center of $G$ coincides with the hypercenter of $G$, in which case $\Gamma_{\rm comm}(G)$ is a subgraph of $\Gamma(G)$.  We note that Theorem C shares a certain similarity with Theorem 1.2 of \cite{BCCHLLP}.  

It makes sense to ask when these two graphs are equal.  In fact, these two graphs coincide if and only if $G$ is an $A$-group, i.e., a group whose Sylow subgroups are abelian.  

\begin{thmD*}\label{introprop:agroup}
Let $G$ be a group. Then $\Gamma(G)$ coincides with $\Gamma_{\rm comm}(G)$ if and only if $G$ is an $A$-group.
\end{thmD*}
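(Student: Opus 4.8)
The plan is to reduce the entire equivalence to the elementary observation that a finite group $G$ is an $A$-group if and only if every nilpotent subgroup of $G$ is abelian. One direction of this observation is immediate, since each Sylow subgroup is itself nilpotent; for the other, recall that a finite nilpotent group is the direct product of its Sylow subgroups, and each of these embeds (up to conjugacy) into a Sylow subgroup of $G$, hence is abelian whenever $G$ is an $A$-group, forcing the nilpotent subgroup to be abelian. Granting this, the edge condition becomes transparent: for two distinct noncentral elements $x,y$, the subgroup $\langle x,y\rangle$ is nilpotent if and only if it is abelian, i.e.\ if and only if $xy=yx$. Thus, once the two graphs have the same vertex set, their edge sets automatically agree. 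The real content of the theorem therefore concentrates on the vertex sets, that is, on when $Z(G)=\hyp(G)$.

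\textbf{The direction $\Gamma(G)=\Gamma_{\rm comm}(G)\Rightarrow A$-group.} If the two graphs coincide, then in particular they have the same vertex set, so by the remark already recorded in the excerpt we have $Z(G)=\hyp(G)$. Suppose, for contradiction, that $G$ is not an $A$-group, so some Sylow $p$-subgroup $P$ is nonabelian. Choose $a,b\in P$ with $[a,b]\neq 1$. Neither $a$ nor $b$ can lie in $Z(G)$, since a central element commutes with everything; hence both are noncentral and, because $Z(G)=\hyp(G)$, both are vertices of $\Gamma(G)=\Gamma_{\rm comm}(G)$. Now $\langle a,b\rangle\leq P$ is a $p$-group, hence nilpotent, so $\{a,b\}$ is an edge of $\Gamma(G)$; but $[a,b]\neq 1$ means it is not an edge of $\Gamma_{\rm comm}(G)$, contradicting the assumed equality. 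Therefore $G$ is an $A$-group.

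\textbf{The direction $A$-group $\Rightarrow \Gamma(G)=\Gamma_{\rm comm}(G)$, and the main obstacle.} By the edge discussion above, it suffices to prove that $Z(G)=\hyp(G)$ for an $A$-group $G$; this is where I expect the main work to lie. I would prove the equivalent statement $Z_2(G)=Z(G)$, from which the stabilization of the upper central series gives $\hyp(G)=Z(G)$. Take $x\in Z_2(G)$; since $Z_2(G)$ is a subgroup containing all powers of $x$, I may assume $x$ is a $p$-element. The map $\phi\colon G\to Z(G)$, $g\mapsto[x,g]$, is then a homomorphism, so all conjugates of $x$ take the form $x^g=x[x,g]$ with $[x,g]$ central; consequently any two conjugates of $x$ commute, and the normal closure $K=\langle x^g:g\in G\rangle$ is an abelian group generated by commuting $p$-elements, hence an abelian normal $p$-subgroup. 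In particular $x\in K\leq O_p(G)$, so $x$ lies in every Sylow $p$-subgroup of $G$; as these are abelian, $x$ commutes with every $p$-element of $G$. For a $p'$-element $y$ of order $m$, the homomorphism property gives $[x,y]^m=[x,y^m]=1$, while $[x,y]\in K$ is a $p$-element, so $[x,y]=1$. Since every element factors as a commuting product of its $p$-part and $p'$-part, $x$ centralizes $G$, proving $Z_2(G)=Z(G)$. Combining this with the edge equivalence yields the desired coincidence of the two graphs.
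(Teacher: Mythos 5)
Your proof is correct. The forward direction and the treatment of the edge sets are essentially the paper's argument: both rest on the observation that in an $A$-group every nilpotent subgroup is abelian (each Sylow factor of a nilpotent subgroup embeds in an abelian Sylow subgroup of $G$), so adjacency in $\Gamma(G)$ and in $\Gamma_{\rm comm}(G)$ coincide once the vertex sets do, and a nonabelian Sylow subgroup would immediately produce an edge of $\Gamma(G)$ that is not an edge of $\Gamma_{\rm comm}(G)$. Where you genuinely diverge is in the converse, on the step $Z(G)=\hyp(G)$: the paper disposes of this in one line by invoking the characterization of $\hyp(G)$ as the set of universal vertices of the nilpotent graph (Proposition 2.1 of Abdollahi--Zarrin, already cited in the introduction), so that $x\in\hyp(G)$ forces $\langle x,y\rangle$ nilpotent, hence abelian, for every $y$. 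You instead prove $Z_2(G)=Z(G)$ directly, via the homomorphism $g\mapsto[x,g]$ into the center, the abelian normal $p$-subgroup generated by the conjugates of $x$, and a coprimality argument to kill $[x,y]$ for $p'$-elements $y$; this is longer but entirely self-contained and avoids importing the Baer-type description of the hypercenter. Both routes are sound; the paper's buys brevity at the cost of an external input, yours buys elementary transparency at the cost of length.
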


We prove that if $\Gamma(G)$ is connected, then its diameter is bounded.  In fact, building upon results of Morgan and Parker in \cite{MP} and \cite{parker}, one may show that the diameter of any connected component of $\Gamma(G)$ is at most $10$. (We note that this result appears as \cite[Proposition 7.6]{BLN}, but we include it here for completeness). This bound can be reduced to $8$ when $G$ is solvable and $\Gamma (G)$ is connected.  When $G$ is solvable and $\Gamma (G)$ is disconnected, we can reduce it even further.  

\begin{thmE*}
Let $G$ be a group.  Then the following are true:
\begin{enumerate}
\item[$(1)$] The connected components of $\Gamma (G)$ have diameter at most $10$.
\item[$(2)$] If $G$ is solvable and $\Gamma (G)$ is connected, then $\Gamma (G)$ has diameter at most $8$.
\item[$(3)$] If $G$ is solvable and $\Gamma (G)$ is disconnected, then one connected component of $\Gamma (G)$ has diameter at most $5$ and the remaining connected components have diameter at most $2$. 
\end{enumerate}  
\end{thmE*}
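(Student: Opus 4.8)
The plan is to reduce every statement to the quotient $\bar G = G/\hyp(G)$ and then compare the nilpotent graph with the commuting graph, for which the diameter bounds of Morgan--Parker \cite{MP} and Parker \cite{parker} are available. First I would record the two structural facts that make this reduction work. Since $\hyp(\bar G)=1$, the group $\bar G$ has trivial centre, so $\hyp(\bar G)=Z(\bar G)=1$; hence $\Gamma(\bar G)$ and $\Gamma_{\rm comm}(\bar G)$ have the same vertex set $\bar G\setminus\{1\}$, and $\Gamma_{\rm comm}(\bar G)$ is a subgraph of $\Gamma(\bar G)$ because commuting elements generate an abelian (hence nilpotent) subgroup. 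Note also that when $G$ is solvable, so is its quotient $\bar G$.

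Then comes the key lemma: for a centreless group, $\Gamma$ and $\Gamma_{\rm comm}$ have exactly the same connected components, and $d_{\Gamma}(x,y)\le d_{\Gamma_{\rm comm}}(x,y)$ for all $x,y$. The distance inequality is immediate from the subgraph relation. For the coincidence of components I would argue that every edge of $\Gamma(\bar G)$ is spanned by a path of length at most $2$ in $\Gamma_{\rm comm}(\bar G)$: if $\langle x,y\rangle$ is nilpotent then it has nontrivial centre, and any $1\ne z\in Z(\langle x,y\rangle)$ is a vertex that commutes with both $x$ and $y$, so $x-z-y$ (or a single edge, if $z\in\{x,y\}$) connects $x$ to $y$ in the commuting graph. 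Consequently a $\Gamma(\bar G)$-component lies in a single $\Gamma_{\rm comm}(\bar G)$-component, and the subgraph relation gives the reverse inclusion; the two partitions into components coincide. In particular the diameter of each component of $\Gamma(\bar G)$ is at most the diameter of the corresponding component of $\Gamma_{\rm comm}(\bar G)$.

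With this in hand the three parts follow by citing the appropriate commuting-graph bound for $\bar G$ and pushing it back up through Theorem B. For $(1)$ I invoke the Morgan--Parker theorem that every component of $\Gamma_{\rm comm}(\bar G)$ has diameter at most $10$; for $(2)$, using that $\Gamma(\bar G)$ is connected (Theorem B preserves the number of components) and hence so is $\Gamma_{\rm comm}(\bar G)$, I invoke Parker's bound of $8$ for connected commuting graphs of solvable centreless groups. In both cases the lemma transfers the bound to $\Gamma(\bar G)$, and Theorem B transfers it to $\Gamma(G)$: components of diameter greater than $1$ keep their diameter, and components of diameter at most $1$ are trivially within the bound. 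For $(3)$, Theorem C tells me that $\bar G$ is Frobenius or $2$-Frobenius, and I would use Parker's description of the components of the commuting graph in exactly these disconnected cases, which yields one component of diameter at most $5$ (the one built from the bottom Frobenius kernel, a nilpotent normal subgroup whose nontrivial elements form a clique, together with the nilpotent blocks glued to it) and all remaining components---coming from the complement conjugates---of diameter at most $2$. Transferring through the lemma and the component bijection of Theorem B gives the stated bounds for $\Gamma(G)$, with exactly one distinguished component.

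The main obstacle is the disconnected solvable case $(3)$. The reductions and the bounds $10$ and $8$ are essentially bookkeeping once the component-coincidence lemma is established, but matching the finer bounds $5$ and $2$ requires a careful reading of the component structure of Frobenius and $2$-Frobenius groups: one must verify that precisely one component (containing the Fitting-type kernel) can have diameter as large as $5$, while every complement conjugate contributes components of diameter at most $2$, and one must ensure that the passage from $\bar G$ back to $G$ via Theorem B neither splits this distinguished component nor inflates any diameter past the claimed value.
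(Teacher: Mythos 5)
Your reductions for parts (1) and (2) are essentially the paper's: Theorem B transfers diameters to $\bar G = G/\hyp(G)$, and for the centreless quotient the commuting graph is a spanning subgraph of $\Gamma(\bar G)$ with the same connected components (your replacement of a nilpotent edge $x\,$--$\,y$ by a path of length at most $2$ through a nontrivial element of $Z(\langle x,y\rangle)$ is exactly the paper's Proposition comparing $\Gamma$ with $\Gamma_{\rm comm}$), so the Morgan--Parker bound of $10$ and Parker's bound of $8$ descend to $\Gamma(G)$. That much is sound.

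Part (3) is where there is a genuine gap. You defer the bounds $5$ and $2$ to ``Parker's description of the components of the commuting graph'' in the Frobenius and $2$-Frobenius cases, but Parker's theorem supplies only the disconnectedness criterion and the diameter-$8$ bound in the connected case; it does not give component diameters in the disconnected case, so there is nothing to cite. Moreover, even if such commuting-graph bounds were available, transferring them through your lemma could only yield $6$, not $5$, for the distinguished component: in the commuting graph a nonabelian nilpotent kernel $F_1\setminus\{1\}$ has diameter $2$ (one must route through $Z(F_1)$), whereas the bound $5$ depends on $F_1\setminus\{1\}$ being a clique, which holds in the nilpotent graph only. The content that must actually be proved --- and that the paper proves directly --- is: (i) each conjugate of the Frobenius complement $H$ of $F_2$ is a component of diameter at most $2$, using $Z(H)\neq 1$; and (ii) in the $2$-Frobenius case the remaining component is $(G\setminus F_2)\cup(F_1\setminus\{1\})$, and every $x\in G\setminus F_2$ lies at distance at most $2$ from $F_1\setminus\{1\}$, via the Frattini argument $G=F_1N_G(H)$, the fact that $N_G(H)$ is a Frobenius group with cyclic complement $C$, and the fact that $C$ does not act fixed-point-freely on $F_1$. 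Your sketch names the right landmarks (the kernel clique, the complement conjugates) but argues none of this, and it is precisely the part you yourself flag as the main obstacle.
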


In this paper, we are going to focus on the solvable case, so we do not determine when $\Gamma (G)$ is disconnected when $G$ is nonsolvable or even nonabelian simple.  And we will not say anything further about the diameter of $\Gamma (G)$ in the nonsolvable case.

When $G$ is solvable, we will present an example of a solvable group where $\Gamma (G)$ is connected and has diameter $7$.  At this time, we do not have any examples where diameter $8$ is obtained, so there is a gap between the bound on the diameter of $\Gamma (G)$ and the examples we can obtain.

We will see that we can obtain tighter bounds on the diameter of $\Gamma (G)$ by imposing extra conditions on $G$ and in those cases we are able to obtain the bound.  For example, when $G$ is an $A$-group, we will show that ${\rm diam} (\Gamma (G)) \le 6$.  When $G$ is a semidirect product of a normal cyclic subgroup by an abelian one, we show that ${\rm diam} (\Gamma (G)) \le 4$.  When $G$ is solvable and ${\rm Fit} (G)$ is cyclic, we show that ${\rm diam} (\Gamma (G)) \le 5$, and when $G$ is a $\{ p, q \}$-group, we show that ${\rm diam} (\Gamma (G)) \le 6$.

\section{Nilpotent graphs whose neighborhoods are cliques}\label{N}

\noindent The goal of this section is to classify the groups for which every nilpotent neighborhood is a nilpotent subgroup.  We will start by stating some preliminary results concerning the closed neighborhood of an element in the nilpotent graph. We begin with the following lemma.

\begin{lem}\label{lem:intersection}
Fix an element $x \in G$ of order $p_1^{\alpha_1} \cdots p_k^{\alpha_k}$ and let $x_i$ be the $p_i$-component of $x$ where $p_1, \dots, p_k$ are distinct primes and $\alpha_i$ are positive integers. Then
\[
{\rm Nil}_G (x) = \bigcap_{i=1}^k {\rm Nil}_G (x_i).
\]
\end{lem}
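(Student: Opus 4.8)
The plan is to prove the two inclusions separately, the forward one being essentially immediate and the reverse one carrying all of the content. Throughout I will use the primary decomposition of the cyclic group $\langle x\rangle$: each $x_i$ is a power of $x$, the $x_i$ commute pairwise, $x=x_1\cdots x_k$, and $\langle x\rangle=\langle x_1,\dots,x_k\rangle$. I will also repeatedly invoke the elementary fact that a finite group is nilpotent if and only if it is the internal direct product of its Sylow subgroups; in particular, in a nilpotent group every $p$-element commutes with every $p'$-element.

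For the inclusion ${\rm Nil}_G(x)\subseteq\bigcap_{i=1}^k{\rm Nil}_G(x_i)$, suppose $\langle x,z\rangle$ is nilpotent. Since each $x_i$ lies in $\langle x\rangle\le\langle x,z\rangle$, the subgroup $\langle x_i,z\rangle$ is contained in $\langle x,z\rangle$ and hence is nilpotent, so $z\in{\rm Nil}_G(x_i)$ for every $i$. This direction needs nothing beyond the fact that subgroups of nilpotent groups are nilpotent.

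The substance is the reverse inclusion. Assume $\langle x_i,z\rangle$ is nilpotent for every $i$ and set $H=\langle x,z\rangle=\langle x_1,\dots,x_k,z\rangle$. Writing $z=\prod_r z_r$ for the primary decomposition of $z$, I first extract commutation relations from the hypotheses: for each $i$ and each prime $r\neq p_i$, the $p_i$-element $x_i$ commutes with the $p_i'$-element $z_r$ inside the nilpotent group $\langle x_i,z\rangle$, so $[x_i,z_r]=1$. The only commutators left undetermined are the \emph{same-prime} ones $[x_i,z_{p_i}]$. The idea is then to assemble $H$ one prime at a time: for $r=p_i$ put $S_r=\langle x_i,z_r\rangle$, which lies in the Sylow $p_i$-subgroup of the nilpotent group $\langle x_i,z\rangle$ and is therefore an $r$-group, and for a prime $r$ dividing $|z|$ but not $|x|$ put $S_r=\langle z_r\rangle$. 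These subgroups generate $H$. The key point to check is that they pairwise commute: any generator of $S_r$ and any generator of $S_s$ with $r\neq s$ fall into one of the cases $[z_r,z_s]=1$, $[x_i,x_j]=1$, or $[x_i,z_s]=1$ with $s\neq p_i$, each of which has already been recorded. Hence the $S_r$ are pairwise commuting subgroups of pairwise coprime order generating $H$, so each is normal, the product is direct, and $H=\prod_r S_r$ is a direct product of $p$-groups, that is, nilpotent; this yields $z\in{\rm Nil}_G(x)$.

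The main obstacle is organizational rather than deep: one must arrange matters so that the same-prime interaction $[x_i,z_{p_i}]$ never needs to be controlled, which is exactly why the pieces $S_r$ are built to absorb both $x_i$ and $z_{p_i}$ at once. The one genuinely load-bearing step is verifying that each $S_r$ really is an $r$-group, for it is coprimality of the orders that turns the pairwise commuting product into a direct product; here the characterization of nilpotency through its Sylow subgroups is used essentially.
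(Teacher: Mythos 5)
Your proof is correct and follows essentially the same route as the paper's: both directions are handled identically, and the reverse inclusion is established by decomposing $z$ into primary components, extracting the cross-prime commutation relations from nilpotency of each $\langle x_i,z\rangle$, observing that each same-prime piece $\langle x_i,z_{p_i}\rangle$ is a $p_i$-group, and exhibiting $\langle x,z\rangle$ as a direct product of these pieces. The only cosmetic difference is that the paper bundles all components of $z$ at primes not dividing $o(x)$ into a single factor $\langle c'\rangle$, while you keep them as separate Sylow-like factors; the argument is the same.
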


\begin{proof}
For an element $c \in {\rm Nil}_G(x)$, the subgroup $\langle x, c\rangle$ is nilpotent and as a subgroup, so is $\langle x_i, c\rangle$.  Hence, $c \in {\rm Nil}_G (x_i)$ for every $i = 1, \ldots, k$.  We obtain $c \in \bigcap_{i=1}^k {\rm Nil}_G (x_i)$, and so, ${\rm Nil}_G (x) \subseteq \bigcap_{i=1}^k {\rm Nil}_G (x_i)$.  

Conversely, assume $c \in Nil_G (x_i)$ for every $i = 1, \ldots, k$, and write $c=c_1 \cdots c_k c'$, where $c_i$ is the $p_i$-component of $c$ and $c'$ is its $\{p_1, \ldots p_k\}'$-component. Then the subgroups $\langle x_i, c_j\rangle$ and $\langle x_i, c' \rangle$ are nilpotent for every $i,j=1, \ldots, k$, as subgroups of $\langle x_i, c \rangle$. This implies that $\langle x_i, c_i \rangle$ is a $p_i$-subgroup.  In addition, $x_i$ and $c_j$ will commute when $i \neq j$, as well as, $x_i$ and $c'$. It follows that
\[
\langle x, c\rangle = \langle x_1, \ldots, x_k,c_1, \ldots, c_k, c'\rangle = \langle x_1, c_1\rangle \times \cdots \times \langle x_k, c_k\rangle \times \langle c'\rangle
\]
which is a direct product of nilpotent groups, so it is nilpotent.
\end{proof}

An immediate consequence of the Lemma~\ref{lem:intersection} is the following result, which coincides with Lemma 3.4 of \cite{AZ}.

\begin{cor}\label{cor:reduction}
Let $G$ be a group. Then $G$ is an $\mathfrak n$-group if and only if ${\rm Nil}_G (y)$ is a subgroup of $G$ for every element $y \in G$ of prime power order.
\end{cor}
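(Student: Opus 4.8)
The plan is to prove the two implications separately, with the nontrivial content coming entirely from \cref{lem:intersection}. Recall that, by definition, $G$ is an $\mathfrak n$-group precisely when ${\rm Nil}_G(x)$ is a subgroup of $G$ for \emph{every} $x \in G$. Hence the forward implication is immediate: if $G$ is an $\mathfrak n$-group, then in particular ${\rm Nil}_G(y)$ is a subgroup for every $y \in G$ of prime power order, since elements of prime power order form a subset of $G$. No computation is required here.

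For the converse, I would argue as follows. Assume that ${\rm Nil}_G(y)$ is a subgroup of $G$ for every element $y$ of prime power order, and fix an arbitrary $x \in G$. Write the order of $x$ as $p_1^{\alpha_1}\cdots p_k^{\alpha_k}$ with the $p_i$ distinct primes, and let $x_i$ denote the $p_i$-component of $x$. Each $x_i$ has prime power order (namely $p_i^{\alpha_i}$), so by hypothesis each set ${\rm Nil}_G(x_i)$ is a subgroup of $G$. By \cref{lem:intersection} we have
\[
{\rm Nil}_G(x) = \bigcap_{i=1}^{k} {\rm Nil}_G(x_i),
\]
and an intersection of subgroups of $G$ is again a subgroup of $G$. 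Therefore ${\rm Nil}_G(x)$ is a subgroup, and since $x$ was arbitrary, $G$ is an $\mathfrak n$-group.

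There is really no serious obstacle to overcome here; the statement is a direct structural consequence of \cref{lem:intersection}, which reduces the question of whether a nilpotent neighborhood is a subgroup to the corresponding question for the prime-power-order components. The only point meriting a moment of care is the observation that the $p_i$-components $x_i$ genuinely have prime power order (so that they fall under the hypothesis), together with the elementary fact that intersections of subgroups are subgroups. Accordingly, I expect the proof to be a short paragraph that simply invokes \cref{lem:intersection} and applies these two remarks.
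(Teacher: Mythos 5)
Your proposal is correct and matches the paper exactly: the paper states this corollary as an immediate consequence of \cref{lem:intersection}, and your argument (trivial forward direction; converse via the prime-power decomposition of $x$, \cref{lem:intersection}, and the fact that an intersection of subgroups is a subgroup) is precisely the intended reasoning.
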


In \cite{AZ}, the authors provide some classes of $\mathfrak n$-groups. For instance, they prove that every $A$-group is such a group. As a consequence, $A$-groups may be characterized as follows:

\begin{thm}\label{thm:agroup}
A group $G$ is an $A$-group if and only if ${\rm Nil}_G (x) = C_G (x)$ for every element $x \in G$ of prime power order.
\end{thm}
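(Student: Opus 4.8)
The plan is to prove both implications by exploiting the fact that a finite nilpotent group is the direct product of its Sylow subgroups. First I would record the one inclusion that holds unconditionally: if $y \in C_G(x)$, then $\langle x, y\rangle$ is abelian, hence nilpotent, so $C_G(x) \subseteq {\rm Nil}_G(x)$ for every $x \in G$. Thus in both directions the entire content lies in controlling the reverse inclusion ${\rm Nil}_G(x) \subseteq C_G(x)$ for elements $x$ of prime power order.

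For the implication from the centralizer condition to the $A$-group property, I would argue by contraposition. Suppose some Sylow $p$-subgroup $P$ of $G$ is nonabelian, and choose distinct non-commuting $x, y \in P$. Both have $p$-power order, and $\langle x, y\rangle \le P$ is a $p$-group, hence nilpotent, so $y \in {\rm Nil}_G(x)$. Since $x$ has prime power order while $y \notin C_G(x)$, this violates the hypothesis ${\rm Nil}_G(x) = C_G(x)$. Therefore every Sylow subgroup of $G$ is abelian and $G$ is an $A$-group.

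For the forward implication, let $G$ be an $A$-group and let $x$ have order $p^a$. It suffices to establish ${\rm Nil}_G(x) \subseteq C_G(x)$. Take $y \in {\rm Nil}_G(x)$ and write $y = y_p y_{p'}$ as the product of its $p$-part and its $p'$-part, both of which lie in the nilpotent group $N = \langle x, y\rangle$ since they are powers of $y$. Because $N$ is the direct product of its Sylow subgroups, the $p$-element $x$ commutes with the $p'$-element $y_{p'}$, while $\langle x, y_p\rangle$ is a $p$-subgroup of $G$ and hence is contained in some Sylow $p$-subgroup, which is abelian by hypothesis; therefore $x$ commutes with $y_p$ as well. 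Combining, $x$ commutes with $y$, giving the desired inclusion.

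The argument is short, and the only delicate point is the reduction in the forward direction: one must verify that $y_p$ and $y_{p'}$ genuinely lie in $\langle x, y\rangle$ and that nilpotency forces the coprime parts to commute elementwise. This is precisely the structure theorem for finite nilpotent groups, so no essential obstacle remains. I note that \Cref{lem:intersection} is not needed here, since the statement already restricts to elements of prime power order.
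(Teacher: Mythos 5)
Your proof is correct and follows essentially the same approach as the paper: the direction from the centralizer condition to the $A$-group property is the identical argument (two elements of a Sylow $p$-subgroup generate a nilpotent group, so the hypothesis forces them to commute). For the forward direction the paper simply cites Lemma 3.5 of \cite{AZ}, whereas you supply the standard self-contained argument via the Sylow decomposition of the nilpotent group $\langle x,y\rangle$; this is a sound filling-in of the cited result rather than a different route.
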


\begin{proof}
When $G$ is an $A$-group, the result follows by Lemma 3.5 of \cite{AZ}.
Conversely, suppose ${\rm Nil}_G (x) = C_G (x)$ for every element $x \in G$ of prime power order.  Let $P$ be a Sylow $p$-subgroup of $G$, and consider the elements $x,y \in P$. Then $\langle x,y \rangle$ is nilpotent, which implies $y \in Nil_G (x) = C_G (x)$. Therefore, $x$ commutes with $y$ and $P$ is abelian.  
\end{proof}

We now show that the class of $\mathfrak n$-group is closed under taking direct products.

\begin{lem}\label{lem:directproduct}
Let $A$ and $B$ be groups such that ${\rm Nil}_A (a)$ and ${\rm Nil}_B (b)$ are subgroups of $A$ and $B$ respectively, for every pair of elements $a \in A$ and $b \in B$. Then ${\rm Nil}_{A \times B} (x)$ is a subgroup of $A \times B$ for every element $x \in A \times B$.
\end{lem}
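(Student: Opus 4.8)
The plan is to reduce the computation of ${\rm Nil}_{A \times B}(x)$ to the two factors by showing that it splits as a direct product of nilpotent neighborhoods. I would fix $x = (a, b) \in A \times B$ and establish the following characterization as the crucial step: for any $(c, d) \in A \times B$, the subgroup $\langle (a,b), (c,d)\rangle$ is nilpotent if and only if both $\langle a, c\rangle$ and $\langle b, d\rangle$ are nilpotent. This I would prove by a standard two-sided argument. For the forward direction, the coordinate projections $A \times B \to A$ and $A \times B \to B$ carry $\langle (a,b),(c,d)\rangle$ onto $\langle a, c\rangle$ and $\langle b, d\rangle$ respectively, and homomorphic images of nilpotent groups are nilpotent. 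For the converse, one observes that $\langle (a,b),(c,d)\rangle$ is contained in $\langle a, c\rangle \times \langle b, d\rangle$; if both factors are nilpotent then so is their direct product, and hence so is any subgroup of it.

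From this characterization it follows at once that ${\rm Nil}_{A\times B}((a,b)) = {\rm Nil}_A(a) \times {\rm Nil}_B(b)$, since $(c,d)$ lies in the left-hand side exactly when $c \in {\rm Nil}_A(a)$ and $d \in {\rm Nil}_B(b)$. By hypothesis ${\rm Nil}_A(a)$ is a subgroup of $A$ and ${\rm Nil}_B(b)$ is a subgroup of $B$, so their direct product is a subgroup of $A \times B$. As $x$ was arbitrary, this would prove the claim.

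The argument is essentially routine and I do not foresee a serious obstacle. The only point requiring a little care is the verification of the nilpotency characterization, and in particular the two structural facts invoked there: that quotients of nilpotent groups are nilpotent (for the forward implication), and that a direct product of two nilpotent groups is nilpotent with subgroups inheriting nilpotency (for the converse). One could alternatively appeal to Corollary~\ref{cor:reduction} in order to restrict attention to elements of prime power order, but since the direct-product decomposition of ${\rm Nil}_{A\times B}$ holds for every $x$, this reduction is not needed here.
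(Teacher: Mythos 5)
Your proposal is correct and follows essentially the same route as the paper: establish that $\langle (a,b),(c,d)\rangle$ is nilpotent precisely when $\langle a,c\rangle$ and $\langle b,d\rangle$ are, deduce ${\rm Nil}_{A\times B}((a,b)) = {\rm Nil}_A(a)\times{\rm Nil}_B(b)$, and conclude. If anything, your version is slightly more careful than the paper's, which asserts the equality $\langle ab,a'b'\rangle = \langle a,a'\rangle\times\langle b,b'\rangle$ (only containment holds in general), whereas you correctly argue via projections in one direction and containment in the other.
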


\begin{proof}
Consider elements $x, y \in A \times B$.  Then there exist elements $a, a' \in A$ and $b, b'\in B$ such that $x = ab$ and $y = a'b'$.  Now, $\langle x,y \rangle$ is nilpotent if and only if $\langle ab, a'b' \rangle = \langle a,a' \rangle \times \langle b,b' \rangle$ is nilpotent.  This implies that ${\rm Nil}_{A\times B} (ab) = {\rm Nil}_A (a) \times {\rm Nil}_B (b)$. The result is now clear.
\end{proof}

We next show that any Frobenius group whose Frobenius complement is an $\mathfrak n$-group is again an $\mathfrak n$-group.

\begin{lem}\label{lem:frobenius}
Let $G$ be a Frobenius group with Frobenius complement $H$. If $H$ is an  $\mathfrak n$-group, then $G$ is an  $\mathfrak n$-group.
\end{lem}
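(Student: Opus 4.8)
The plan is to check the defining condition for an $\mathfrak n$-group directly, reducing via \cref{cor:reduction} to elements $y$ of prime power order, and to control the nilpotent subgroups of $G$ through a single structural dichotomy. Writing $G = K \rtimes H$ with $K$ the Frobenius kernel, the key claim is that every nilpotent subgroup $N$ of $G$ is either contained in $K$ or contained in a conjugate of $H$. Granting this, I would split into the cases $y \in K$ and $y \notin K$, and show that in each case ${\rm Nil}_G(y)$ is a genuine subgroup.

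To establish the dichotomy, I would use three standard features of Frobenius groups: the kernel $K$ is nilpotent, the centralizer $C_G(k)$ lies in $K$ for every nontrivial $k \in K$, and any subgroup meeting $K$ trivially is contained in a conjugate of $H$ (this last point following from coprimality of $|K|$ and $|H|$ together with Schur--Zassenhaus). Let $N$ be nilpotent. If $N \cap K \neq 1$, then $N \cap K$ is a nontrivial normal subgroup of the nilpotent group $N$, hence meets $Z(N)$ nontrivially; picking $1 \neq k_0 \in Z(N) \cap K$ gives $N \le C_G(k_0) \le K$. Otherwise $N \cap K = 1$ and $N$ sits inside a conjugate of $H$, as recalled.

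Now fix $y$ of prime power order. If $y \in K$, then whenever $\langle y, z\rangle$ is nilpotent it contains the nontrivial kernel element $y$ and therefore lies in $K$ by the dichotomy, so $z \in K$; conversely every $z \in K$ satisfies $z \in {\rm Nil}_G(y)$ since $K$ is nilpotent. Hence ${\rm Nil}_G(y) = K$. If $y \notin K$, then $y$ lies in a conjugate of $H$, and because ${\rm Nil}_G(y^g) = {\rm Nil}_G(y)^g$ and the property of being a subgroup is conjugation-invariant, I may assume $y \in H$. For $z \in {\rm Nil}_G(y)$, the nilpotent group $\langle y, z\rangle$ contains $y \notin K$, so it is not inside $K$ and thus lies in some $H^g$; but $y \in H \cap H^g$ is nontrivial, and since distinct conjugates of $H$ meet trivially this forces $H^g = H$. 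Therefore $\langle y, z\rangle \le H$, so $z \in H$ and ${\rm Nil}_G(y) = {\rm Nil}_H(y)$, which is a subgroup because $H$ is an $\mathfrak n$-group. \Cref{cor:reduction} then gives that $G$ is an $\mathfrak n$-group.

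The structural recollections about Frobenius groups are routine; the step I expect to demand the most care is the case $y \notin K$, where one must not only push $\langle y, z\rangle$ into some conjugate of the complement but also identify that conjugate as $H$ itself, using that a nontrivial element cannot lie in two distinct complements. This is exactly what collapses ${\rm Nil}_G(y)$ onto ${\rm Nil}_H(y)$ and lets the hypothesis on $H$ do its work.
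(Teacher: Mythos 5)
Your proof is correct and follows essentially the same route as the paper's: split on whether the element lies in the Frobenius kernel $K$, get ${\rm Nil}_G(y)=K$ in the first case, and collapse ${\rm Nil}_G(y)$ onto ${\rm Nil}_{H^g}(y)$ for a single conjugate of $H$ in the second. The only differences are cosmetic — you invoke \cref{cor:reduction} (which is not actually needed) and you spell out, via the dichotomy on nilpotent subgroups, the coprimality step that the paper dismisses as ``easy to see.''
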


\begin{proof}
Suppose the element $x \in G$ belongs to the Frobenius kernel $K$ of $G$.  Then ${\rm Nil}_G (x) = K$ as $K$ is nilpotent.  Consider an element $x \in G \setminus K$.  Then $x$ belongs to some conjugate of $H$, say $H_1$.  Since $H_1$ and $K$ have coprime orders, it is easy to see that ${\rm Nil}_G (x) \leq H_1$, implying that ${\rm Nil}_G (x) = {\rm Nil}_{H_1} (x)$. Since $H_1$ is an $\mathfrak n$-group, the result follows.
\end{proof}

We denote by $O_{\pi} (G)$ the $\pi$-radical of $G$, that is, the maximum normal $\pi$-subgroup of $G$. Next, $O^{\pi}(G)$ denotes the $\pi$-residual of $G$, namely, the smallest normal subgroup of $G$ for which the quotient is a $\pi$-group. The commutator of two elements $x$ and $y$ of a group $G$ is the element $[x,y]=x^{-1}y^{-1}xy=x^{-1}x^y$.  We are now ready to prove the following.

\begin{thmA*}
Let $G$ be a finite solvable group with trivial center. Then ${\rm Nil}_G(x)$ is a nilpotent subgroup of $G$ for every nontrivial element $x \in G$ if and only if $G$ is a Frobenius group with nilpotent Frobenius complement.
\end{thmA*}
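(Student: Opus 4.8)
\textit{Proof proposal.}

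The plan is to treat the two implications separately, the forward direction being a quick consequence of the machinery already assembled. If $G$ is a Frobenius group with nilpotent complement $H$, then $H$ is an $\mathfrak n$-group because any two elements of a nilpotent group generate a nilpotent subgroup, so Lemma~\ref{lem:frobenius} shows that $G$ is an $\mathfrak n$-group. Inspecting the proof of that lemma, one sees moreover that ${\rm Nil}_G(x)$ equals the Frobenius kernel $K$ when $x\in K$ and equals a complement conjugate $H_1$ when $x\notin K$; both are nilpotent, so every nilpotent neighbourhood is a nilpotent subgroup.

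For the converse I assume $G\neq 1$ is solvable with $Z(G)=1$ and every ${\rm Nil}_G(x)$ (for $x\neq 1$) is a nilpotent subgroup. Set $F={\rm Fit}(G)$ and $\pi=\pi(F)$; since $Z(G)=1$ the group $G$ is non-nilpotent, so $F$ is a proper nontrivial normal subgroup with $C_G(F)\le F$. The core of the argument is to prove that $F$ is a Hall subgroup. Fix $p\in\pi$ and a nontrivial $v\in O_p(G)$. As $O_p(G)$ lies in every Sylow $p$-subgroup $P'$ of $G$, for each such $P'$ and each $s\in P'$ the group $\langle v,s\rangle\le P'$ is a $p$-group, so $s\in{\rm Nil}_G(v)$. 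Thus the nilpotent subgroup ${\rm Nil}_G(v)$ contains every Sylow $p$-subgroup, hence it contains $O^{p'}(G)$; being a normal nilpotent subgroup, $O^{p'}(G)\le F$, so every Sylow $p$-subgroup lies in $F$. As $p\in\pi$ was arbitrary, $\gcd(|F|,|G:F|)=1$, and $F$ is a normal Hall subgroup, whence $G=F\rtimes H$ by Schur--Zassenhaus. I expect this Hall step to be the main obstacle: it is exactly here that one must use that ${\rm Nil}_G(v)$ is a genuine nilpotent subgroup and not merely that centralizers are nilpotent. For instance $S_4$ is a solvable group with trivial center and nilpotent centralizers for which ${\rm Nil}_{S_4}((12)(34))$ fails to be a subgroup, and the computation above isolates precisely why such $2$-Frobenius configurations are ruled out.

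Next I would show that $H$ acts fixed-point-freely on $F$. Since $F$ is a normal Hall $\pi$-subgroup it contains every $\pi$-element, so it suffices to exclude a nontrivial $\pi'$-element centralizing a nontrivial $x\in F$. Given such $x$, nilpotency of $F$ gives $F\subseteq{\rm Nil}_G(x)$, and if a $\pi'$-element $g$ centralizes $x$ then $g\in{\rm Nil}_G(x)$ as well; hence $F\langle g\rangle$ is nilpotent, and comparing the Hall $\pi$- and $\pi'$-parts forces $F\langle g\rangle=F\times\langle g\rangle$, so $g\in C_G(F)\le F$ and therefore $g=1$. Consequently $C_G(x)\le F$ for every $1\neq x\in F$, which is precisely the assertion that $H$ acts fixed-point-freely; thus $G$ is a Frobenius group with kernel $F$ and complement $H$.

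It remains to prove that $H$ is nilpotent. For $1\neq x\in H$ the fixed-point-free action yields ${\rm Nil}_G(x)\cap F=1$, so the subgroup ${\rm Nil}_G(x)$ is contained in the unique complement through $x$, giving ${\rm Nil}_H(x)={\rm Nil}_G(x)$; in particular every nilpotent neighbourhood in $H$ is again a nilpotent subgroup. This property by itself does not force nilpotency (for example $C_7\rtimes C_3$ satisfies it), so here I would invoke that $H$ is a Frobenius complement: its Sylow subgroups are cyclic or generalized quaternion and every subgroup of order $pq$ is cyclic. When $|H|$ is even this provides a central involution, and when $|H|$ is odd these constraints force $H$ to be cyclic; in either case $Z(H)\neq 1$. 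Choosing $1\neq z\in Z(H)$, each $\langle z,y\rangle$ is abelian, so ${\rm Nil}_H(z)=H$, and since this neighbourhood is nilpotent, $H$ is nilpotent, completing the proof. The two points requiring genuine care are the Hall step above and this final reduction, both of which rely on the full subgroup/nilpotency hypothesis rather than on the weaker nilpotent-centralizer condition.
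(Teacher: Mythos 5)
Your argument is correct and follows essentially the same route as the paper's proof: you identify ${\rm Fit}(G)$ as a normal nilpotent Hall subgroup by observing that $O^{p'}(G)\le {\rm Nil}_G(v)$ for $1\ne v\in O_p(G)$, apply Schur--Zassenhaus, and rule out coprime elements centralizing a nontrivial element of the kernel via the nilpotency of $F\langle g\rangle$. If anything, your write-up is slightly more complete than the paper's, which stops once $G$ is shown to be Frobenius with kernel ${\rm Fit}(G)$ and does not spell out the forward implication or the nilpotency of the complement; your observations that ${\rm Nil}_G(x)={\rm Nil}_H(x)$ for $1\ne x\in H$ and that $Z(H)\ne 1$ forces $H={\rm Nil}_G(z)$ to be nilpotent supply that final step.
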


\begin{proof}
Let $\pi$ be the set of all primes $p \in \pi(G)$ such that $O_p (G)$ is not trivial.  Since $G$ is solvable, the set $\pi$ is not empty.  For a prime $p \in \pi$, consider an element $x_p \in O_p (G) \setminus \{1\}$.
    
Then $O^{p'} (G)$ is contained in ${\rm Nil}_G (x_p)$ and therefore it is nilpotent.  As a consequence $O^{p'} (G)$ is a Sylow $p$-subgroup, implying that for every prime $p \in \pi$ there exists a unique Sylow $p$-subgroup of $G$.  Set $K$ to be the subgroup of $G$ generated by all Sylow $p$-subgroups of $G$ for $p \in \pi$.  Note that $K$ coincides with the Fitting subgroup of $G$ because any minimal normal subgroup of $G$ is contained in the Fitting subgroup of $G$.  By the Schur-Zassenhaus Theorem, there exists a subgroup $H$ of $G$ such that $G = KH$ and $H \cap K = \{1\}$. 

To prove that $G$ is a Frobenius group with Frobenius kernel $K$ and Frobenius complement $H$, it is sufficient to show that for any $1 \ne x \in K$ the centralizer $C_G (x)$ is contained in $K$.  Indeed, assume that there exists an element $1 \ne x \in K$ such that $C_G (x) \not\leq K$.  Then there exist $k \in K$ and $h \in H \setminus \{1\}$ such that $[kh,x]=1$.  As a consequence $kh \in {\rm Nil}_G(x)$.  Since $K \leq {\rm Nil}_G(x)$ and ${\rm Nil}_G(x)$ is a subgroup, we have $K \langle h \rangle \leq {\rm Nil}_G(x)$.  Now, the fact that $H$ and $K$ have coprime orders implies that $h$ centralizes $K$, which is the Fitting subgroup of $G$. Therefore $h \in C_G (K) \leq K$ leads to a contradiction. 
\end{proof}

We close this section with the following question.  We have seen that $A$-groups and Frobenius groups whose Frobenius complements are solvable $\mathfrak n$-groups as are their direct products.  Recall that by modding out by the hypercenter, we may consider centerless solvable groups.  Thus, we ask whether any centerless solvable group $G$ that is an $\mathfrak n$-group will be a subgroup of a direct product $A \times F$ where $A$ is an $A$-group and $F$ is the direct product of Frobenius groups whose Frobenius complements are $\mathfrak n$-groups.

\section{Connectivity of $\Gamma (G)$}

\noindent The goal of this section is to characterize the groups where $\Gamma (G)$ is disconnected. We start pointing out the connection between adjacency in $\Gamma (G)$ and $\Gamma (G/\hyp(G))$, for a group $G$.

\begin{pro}\label{prop:adj}
Let $G$ be a group and $x,y \in G \setminus \hyp(G)$ such that $x \hyp(G) \neq y \hyp(G)$. Then $x$ and $y$ are adjacent in $\Gamma(G)$ if and only if $x\hyp(G)$ and $y\hyp(G)$ are adjacent in $\Gamma(G/\hyp(G))$.
\end{pro}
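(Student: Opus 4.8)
The plan is to reduce everything to the statement that, for $H=\langle x,y\rangle$, the subgroup $H$ is nilpotent if and only if its image $H\hyp(G)/\hyp(G)$ in $G/\hyp(G)$ is nilpotent; the two adjacency assertions are merely reformulations of these nilpotency conditions. First I would check that both pairs really are pairs of distinct vertices in the respective graphs. Since $x,y\notin\hyp(G)$ and $x\hyp(G)\neq y\hyp(G)$, the elements $x,y$ are distinct vertices of $\Gamma(G)$, and the cosets $x\hyp(G),y\hyp(G)$ are distinct nontrivial elements of $G/\hyp(G)$; moreover $\hyp\big(G/\hyp(G)\big)=1$, so every nontrivial element of $G/\hyp(G)$ is a vertex of $\Gamma(G/\hyp(G))$. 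Thus $x\sim y$ in $\Gamma(G)$ means exactly that $H$ is nilpotent, while $x\hyp(G)\sim y\hyp(G)$ in $\Gamma(G/\hyp(G))$ means exactly that $H\hyp(G)/\hyp(G)\cong H/(H\cap\hyp(G))$ is nilpotent.

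The forward implication is immediate: a quotient of a nilpotent group is nilpotent, so if $H$ is nilpotent then so is its image in $G/\hyp(G)$. The content lies in the converse. Suppose $H/(H\cap\hyp(G))$ is nilpotent. The strategy is to show that $H\cap\hyp(G)$ lies in the hypercenter $\hyp(H)$ of $H$, and then to invoke the classical fact that a group which is hypercentral-by-nilpotent is nilpotent.

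For the inclusion $H\cap\hyp(G)\leq\hyp(H)$ I would argue along the upper central series of $G$ by induction, showing $Z_i(G)\cap H\leq Z_i(H)$ for every $i$: if $z\in Z_{i+1}(G)\cap H$ then $[z,h]\in Z_i(G)\cap H\leq Z_i(H)$ for all $h\in H$ by the inductive hypothesis, whence $z\in Z_{i+1}(H)$. Passing to the terminal term gives $H\cap\hyp(G)\leq\hyp(H)$. Once this is in hand, $H/\hyp(H)$ is a quotient of $H/(H\cap\hyp(G))$ and is therefore nilpotent; but $\hyp\big(H/\hyp(H)\big)=1$ and a nontrivial nilpotent group has nontrivial hypercenter, so $H/\hyp(H)$ must be trivial, i.e. $H=\hyp(H)$ is nilpotent. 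The main obstacle is precisely this converse direction, and within it the inclusion $H\cap\hyp(G)\leq\hyp(H)$: without it one only knows that $H$ modulo some subgroup coming from the ambient hypercenter is nilpotent, which a priori says nothing about $H$ itself. The key point is that intersecting the ambient hypercenter with $H$ produces a hypercentral subgroup of $H$, after which the standard hypercentral-by-nilpotent argument closes the proof.
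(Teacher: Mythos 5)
Your proof is correct, and in the converse direction it uses a genuinely different (though equally elementary) mechanism from the paper's. The paper works with the lower central series of $H=\langle x,y\rangle$: nilpotency of the image gives $\gamma_{c+1}(H)\le \hyp(G)$ for some $c$, and then continuing the series and using that $[\hyp(G),\underbrace{G,\dots,G}_{t}]=1$ for suitable $t$ (since $\hyp(G)=Z_m(G)$ for some $m$ in a finite group) forces a term of the lower central series of $H$ to be trivial. You instead work with the upper central series: you prove the intersection lemma $Z_i(G)\cap H\le Z_i(H)$ by induction, deduce $H\cap\hyp(G)\le\hyp(H)$, and close by observing that $H/\hyp(H)$ is then a nilpotent group with trivial center, hence trivial. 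Both arguments are sound for finite groups; the paper's is a two-line computation, while yours isolates a reusable general fact (the hypercenter of $G$ meets every subgroup inside that subgroup's hypercenter) at the cost of slightly more machinery. Your preliminary check that the cosets $x\hyp(G)$ and $y\hyp(G)$ are genuine distinct vertices of $\Gamma(G/\hyp(G))$, via $\hyp\bigl(G/\hyp(G)\bigr)=1$, is a detail the paper leaves implicit.
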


\begin{proof}
If $x$ and $y$ generate a nilpotent subgroup, then also the subgroup generated by $x\hyp(G)$ and $y\hyp(G)$ is clearly nilpotent. 

Now assume that, modulo $\hyp(G)$, $x$ and $y$ generate a nilpotent subgroup $H$ of $G$. Thus there exists a positive integer $c$ such that $$[\underbrace{H, ..., H}_{c \mbox{ times}}] \leq \hyp(G).$$ Hence, for a suitable $t$ we have 
\begin{eqnarray*}
   [\underbrace{H, ..., H}_{c \mbox{ times}}, \underbrace{H, ..., H}_{t \mbox{ times}}] &\leq&  [\hyp(G), \underbrace{G, ..., G}_{t \mbox{ times}}]=1.
\end{eqnarray*}
Therefore, $H$ is nilpotent.
\end{proof}

\begin{thmB*}
For any group $G$ the number of connected components of $\Gamma(G)$ equals the number of connected components of $\Gamma(G/\hyp(G))$, and there is a correspondence between the connected components of $\Gamma(G)$ and $\Gamma(G/\hyp(G))$ that maps connected components of diameter $1$ to connected components of diameter $0$ or $1$ and preserves the diameter of connected components whose diameter is greater than $1$.
\end{thmB*}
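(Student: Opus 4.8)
Write $\pi \colon G \to G/\hyp(G)$ for the natural projection. The plan is to show that $\Gamma(G)$ is obtained from $\Gamma(G/\hyp(G))$ by replacing each vertex with a clique of size $|\hyp(G)|$, and then to read off both assertions from this blow-up description. The first ingredient is that every coset $g\hyp(G)$ with $g \notin \hyp(G)$ is a clique of $\Gamma(G)$: if $x$ and $xz$ are distinct elements of such a coset with $z \in \hyp(G)$, then $\langle x, xz \rangle = \langle x, z \rangle$ is nilpotent because $z$ lies in the hypercenter, which is exactly the set of universal vertices of the nilpotent graph of $G$. Combined with \Cref{prop:adj}, this shows that adjacency in $\Gamma(G)$ obeys two rules: two distinct elements of the same coset are always adjacent, and two elements lying in distinct cosets $g\hyp(G) \neq g'\hyp(G)$ are adjacent if and only if $g\hyp(G)$ and $g'\hyp(G)$ are adjacent in $\Gamma(G/\hyp(G))$. (Here we use that $\hyp(G/\hyp(G)) = 1$, so that the vertices of $\Gamma(G/\hyp(G))$ are precisely the nontrivial cosets, matching the image of $G \setminus \hyp(G)$ under $\pi$.)

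Next I would build the correspondence on connected components. Since $\pi$ sends each edge of $\Gamma(G)$ either to a vertex or to an edge of $\Gamma(G/\hyp(G))$ by the two rules above, every path in $\Gamma(G)$ projects to a walk in $\Gamma(G/\hyp(G))$; hence $\pi$ carries each connected component $C$ of $\Gamma(G)$ into a single connected component $\Phi(C)$ of $\Gamma(G/\hyp(G))$. Conversely a path in $\Gamma(G/\hyp(G))$ lifts, because consecutive cosets along it are distinct and adjacent, so by \Cref{prop:adj} any choice of representatives is adjacent in $\Gamma(G)$; moreover one may prescribe the initial representative and, using the clique property at the last step, reach any prescribed representative of the terminal coset. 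This lifting shows that $\Phi$ is a bijection and, more precisely, that $C = \pi^{-1}(\Phi(C))$, i.e.\ a component of $\Gamma(G)$ consists of exactly those elements whose coset lies in the corresponding component of $\Gamma(G/\hyp(G))$. In particular the numbers of connected components of the two graphs coincide.

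Applying the same projection and lifting to geodesics yields the distance identity $d_G(x,y) = d_{G/\hyp(G)}(\pi(x), \pi(y))$ whenever $x$ and $y$ are vertices of $\Gamma(G)$ lying in distinct cosets, while two distinct vertices in a common coset are at distance $1$. I would then finish by a case analysis on $\delta = {\rm diam}(\Phi(C))$. If $\delta \geq 2$, lifting a diametral pair $u, v \in \Phi(C)$ to distinct cosets gives two vertices of $C$ at distance $\delta$, whereas every pair of vertices of $C$ is at distance at most $\max(\delta, 1) = \delta$; hence ${\rm diam}(C) = \delta$, so the diameter is preserved once it exceeds $1$. If $\delta = 1$ then all pairs of vertices of $C$ are at distance $1$, giving ${\rm diam}(C) = 1$; and if $\delta = 0$ then $C$ is a single coset, which is a clique, so ${\rm diam}(C) \in \{0,1\}$. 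Read in the direction $C \mapsto \Phi(C)$, this says exactly that a component of diameter greater than $1$ is sent to a component of the same diameter, and a component of diameter $1$ is sent to a component of diameter $0$ or $1$.

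I expect the main obstacle to be the bookkeeping in the lifting of paths and geodesics so that their endpoints match the prescribed vertices: \Cref{prop:adj} applies only across distinct cosets, so one must track when consecutive vertices of a projected path collapse to a single vertex (which can only shorten the resulting walk) and must invoke the clique property precisely at the step where the lifted path must reach a specified representative of the terminal coset. The remaining diameter computation is routine; its only asymmetric feature is that a single-vertex component of $\Gamma(G/\hyp(G))$ blows up to a clique, hence to a diameter-$1$ component of $\Gamma(G)$ as soon as $|\hyp(G)| > 1$, which is the source of the ``diameter $1$ maps to diameter $0$ or $1$'' clause.
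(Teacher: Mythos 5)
Your proposal is correct and follows essentially the same route as the paper: both rest on \cref{prop:adj} together with projecting paths to $\Gamma(G/\hyp(G))$ and lifting them back, then reading off the component bijection and the diameter dichotomy. The only substantive addition is that you make explicit the fact that each nontrivial coset of $\hyp(G)$ is a clique (the ``blow-up'' description), which the paper uses only implicitly when discussing diameter-$1$ components; your version is, if anything, slightly more careful at the step where the paper shortens a path whose consecutive vertices project to the same coset.
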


\begin{proof}
We first observe that when $x$ and $y$ are adjacent in $\Gamma (G)$, by Proposition \ref{prop:adj} either $x\hyp (G) = y\hyp (G)$ or $x\hyp (G)$ and $y \hyp (G)$ are adjacent in $\Gamma (G/\hyp (G))$.  

Suppose $d \ge 2$.  We show that there exists a path of length $d$ between $x$ and $y$ in $\Gamma (G)$ if and only if there is a path of length $d$ between $x \hyp (G)$ and $y \hyp (G)$ in $\Gamma (G/\hyp (G))$.   To see this suppose $x, y \in G \setminus \hyp(G)$ such that the distance between $x$ and $y$ is $d$. Since $d \geq 2$, $x\hyp(G) \neq y\hyp(G)$ and there exist $x_1, \ldots, x_d+1 \in G \setminus \hyp(G)$ such that $x_1=x, x_d+1=y$ and $x_i$ is adjacent to $x_{i+1}$ for every $i \in \{1, \ldots, d\}$. Then $x_1\hyp(G), \ldots, x_{d+1}\hyp(G)$ is a path in $\Gamma(G/\hyp(G))$ of length at most $d$. Actually, for every $i \in \{1, \ldots d\}$, $x_i\hyp(G) \neq x_{i+1}\hyp(G)$. For, assume that $x_j\hyp(G) =x_{j+1}\hyp(G)$ for some $j \in \{1, \ldots, d\}$. Then, by Proposition \ref{prop:adj}, removing an element from $\{x_1, \ldots, x_{d+1}\}$ we find a path in $\Gamma(G)$ connecting $x$ and $y$ of length less than $d$, which is a contradiction.  This proves the observation.

It follows that $x$ and $y$ are in the same connected component of $\Gamma (G)$ if and only if $x\hyp (G)$ and $y \hyp (G)$ are in the same connected component of $\Gamma (G/\hyp (G))$.  This gives the bijection between connected components.  

Notice that a connected component of diamter $1$ in $\Gamma (G)$ can correspond to either a connected component that is a complete graph or one that has diamter $1$ in $\Gamma (G/\hyp (G))$.  For connected components of bigger diameter, we see that the diameter is preserved.
 \end{proof}

We obtain the following corollary for nilpotent graphs that are connected.

\begin{cor}
If $G$ is a group, then $\Gamma (G)$ is connected if and only if $\Gamma (G/\hyp (G))$ is connected.  
\end{cor}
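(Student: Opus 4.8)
The plan is to obtain this as an immediate formal consequence of Theorem B. Recall that a graph is connected precisely when it possesses a single connected component (we adopt the usual convention that the empty graph is connected, which covers the degenerate situation discussed below). Thus connectivity is entirely encoded by the number of connected components, and the assertion of the corollary is nothing more than the specialization of Theorem B to the case where this number equals one.

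Concretely, I would invoke Theorem B, which guarantees that $\Gamma(G)$ and $\Gamma(G/\hyp(G))$ have the same number of connected components. In particular, $\Gamma(G)$ has exactly one connected component if and only if $\Gamma(G/\hyp(G))$ has exactly one connected component. Rephrasing both sides in terms of connectivity yields that $\Gamma(G)$ is connected if and only if $\Gamma(G/\hyp(G))$ is connected, which is exactly the claim. Note that the finer information in Theorem B about diameters is not needed here; only the equality of the number of components is used.

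The one point meriting a line of care is the degenerate case in which $G$ coincides with its hypercenter, i.e.\ $G$ is nilpotent. Then $G \setminus \hyp(G) = \emptyset$, so $\Gamma(G)$ has no vertices, and likewise $G/\hyp(G)$ is trivial, so $\Gamma(G/\hyp(G))$ has no vertices; under the convention above both are vacuously connected and the equivalence still holds. In every other case both vertex sets are nonempty and the component-count characterization of connectivity applies verbatim. There is no genuine obstacle in this argument: all the substantive work---the correspondence between the components of $\Gamma(G)$ and those of $\Gamma(G/\hyp(G))$ obtained by passing to the quotient---was already carried out in \cref{prop:adj} and Theorem B, and the corollary merely reads off the special case ``one component.''
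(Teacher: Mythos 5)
Your proof is correct and follows exactly the route the paper intends: the corollary is stated as an immediate consequence of Theorem~B, read off by specializing the equality of the number of connected components to the case of a single component. The extra care you take with the degenerate nilpotent case is fine but not something the paper bothers with.
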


For solvable groups, we can determine precisely when this occurs.  We denote by ${\rm Fit} (G)$ the Fitting subgroup of a group $G$.  Recall that a group $G$ is a {\it $2$-Frobenius group} if it has normal subgroups $K$ and $L$ such that $L$ is a Frobenius group with Frobenius kernel $K$ and $G/K$ is a Frobenius group with Frobenius kernel $L/K$ (see for instance \cite{CL-cycl}).

First, we show the following result.

\begin{pro}\label{prop:frob2frob}
If $G$ is a Frobenius group or a $2$-Frobenius group, then $\Gamma (G)$ is disconnected.
\end{pro}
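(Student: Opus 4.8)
The plan is to locate, in each of the two cases, a nonempty proper set $S$ of vertices of $\Gamma(G)$ such that no edge of $\Gamma(G)$ joins $S$ to its complement; this immediately forces $\Gamma(G)$ to be disconnected. As a preliminary step I would record that both a Frobenius group and a $2$-Frobenius group have trivial centre. For the Frobenius case this is standard; for the $2$-Frobenius case, the image of $Z(G)$ in the Frobenius quotient $G/K$ lies in its centre and is therefore trivial, so $Z(G) \le K$, and then the fixed-point-free action of the complement of $L$ on $K$ forces $Z(G)=1$. Consequently $\hyp(G)=1$ in both cases, so the vertex set of $\Gamma(G)$ is exactly $G \setminus \{1\}$ and I may argue directly about elements of the kernels.

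Suppose first that $G$ is a Frobenius group with kernel $K$ and complement $H$, and take $S = K \setminus \{1\}$. The claim is that ${\rm Nil}_G(k) \subseteq K$ for every $k \in S$. Fixing a prime $p$ dividing the order of $k$ and letting $k_p$ be the $p$-part of $k$, note that $p \mid |K|$ and $|K|,|H|$ are coprime, so $p \nmid |H|$ and hence every $p$-element of $G$ lies in $K$. If $g \in {\rm Nil}_G(k)$, then $\langle k_p, g\rangle$ is nilpotent as a subgroup of $\langle k,g\rangle$, so the $p$-part $g_p$ of $g$ is a $p$-element (thus in $K$) while the $p'$-part $g_{p'}$ commutes with $k_p$; since $1 \ne k_p \in K$ and $C_G(k_p) \le K$ by the Frobenius hypothesis, we get $g_{p'} \in K$ and therefore $g \in K$. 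Thus no vertex of $K\setminus\{1\}$ is adjacent to a vertex of $G\setminus K$, and as $K \ne 1$ and $H \ne 1$ both $S$ and its complement are nonempty, so $\Gamma(G)$ is disconnected.

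Now suppose $G$ is a $2$-Frobenius group with normal subgroups $K \le L$ as in the definition, and take $S = L \setminus K$, which is nonempty because $L/K \ne 1$. I would show that a vertex $x \in S$ is adjacent only to vertices of $S$ by separating $S$ from $G \setminus L$ and from $K \setminus \{1\}$ using the two available Frobenius structures. For the first separation, $G/K$ is Frobenius with kernel $L/K$ and $xK \in (L/K)\setminus\{1\}$; if $x$ were adjacent to $y$, then $\langle xK, yK\rangle$ would be nilpotent, so by the Frobenius case applied to $G/K$ we would get $yK \in L/K$, i.e.\ $y \in L$. For the second separation, $L$ is Frobenius with kernel $K$, so its complements partition $L \setminus K$ and $x$ lies in one of them; hence the order of $x$ is coprime to $|K|$ and $x$ acts fixed-point-freely on $K$, giving $C_K(x)=1$. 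If $x$ were adjacent to some $k \in K\setminus\{1\}$, then $\langle x,k\rangle$ would be nilpotent with $x$ and $k$ of coprime order, forcing $[x,k]=1$ and hence $k \in C_K(x)=1$, a contradiction. Combining the two separations, every neighbour of $x$ lies in $S$, and since the complement of $S$ contains $K \setminus \{1\} \ne \emptyset$, the graph $\Gamma(G)$ is disconnected.

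I expect the Frobenius case to be routine, since the coprimality of $|K|$ and $|H|$ together with $C_G(k)\le K$ does all the work. The main obstacle is the $2$-Frobenius case: here neither $K \setminus \{1\}$ nor $G \setminus L$ is separated from the rest (already in $S_4$ a transposition in $G\setminus L$ is adjacent to a double transposition in $K$), so the correct set to isolate is the middle layer $S = L \setminus K$. The essential device is to use both Frobenius structures at once — the quotient $G/K$, reusing the kernel-isolation established in the Frobenius case to push $S$ away from $G\setminus L$, and the subgroup $L$, using the fixed-point-free action of its complements on $K$ to push $S$ away from $K$.
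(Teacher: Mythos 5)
Your proposal is correct and follows essentially the same strategy as the paper: in the Frobenius case it isolates $K\setminus\{1\}$ using the coprimality of kernel and complement orders together with $C_G(k)\le K$, and in the $2$-Frobenius case it isolates the middle layer $L\setminus K$ by combining the Frobenius structure of $L$ (to separate from $K$) with that of $G/K$ (to separate from $G\setminus L$). The only differences are cosmetic — you decompose neighbours into $p$- and $p'$-parts and reuse the kernel-isolation claim in the quotient, where the paper instead invokes a common centralizing element coming from the nontrivial centre of a nilpotent subgroup — so no further comparison is needed.
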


\begin{proof}
First assume that $G$ is a Frobenius group. As the center of $G$ is trivial, any nontrivial element is a vertex in $\Gamma (G)$. Denote by $K$ the Frobenius kernel of $G$. Since $K$ is nilpotent, $K \setminus \{ 1 \}$ is contained in a connected component of $G$. Actually, $K\setminus \{ 1 \}$ is a connected component of $\Gamma (G)$.  Indeed, assume by way of contradiction that there exist an element $x \in K \setminus \{ 1 \}$ and an element $y \in G \setminus K$ such that $H = \langle x, y \rangle$ is nilpotent. Since $x$ and $y$ have coprime order, they commute. Thus $y \in C_G (x)$ which is a contradiction.

Now assume that $G$ is a $2$-Frobenius group, and let $F_1, F_2$ be subgroups of $G$ such that  $F_1 = {\rm Fit} (G)$ and $F_2/F_1 = {\rm Fit} (G/F_1)$. Then $F_1$ is the Frobenius kernel of $F_2$ and $F_2/F_1$ is the Frobenius kernel of $G/F_1$. It is sufficient to prove that an element in $F_2\setminus F_1$ cannot be adjacent to any nontrivial element outside $F_2 \setminus F_1$ in $\Gamma (G)$.  Consider an element $x \in F_2 \setminus F_1$. Since $F_1$ is the Frobenius Kernel of $F_2$, then the previous argument implies that for every element $y \in F_1 \setminus \{ 1 \}$ the elements $x$ and $y$ are not adjacent in $\Gamma (G)$.  Assume by contradiction that there exists an element $z \in G\setminus F_2$ such that $x$ and $z$ are adjacent in $\Gamma (G)$. Then $xF_1$ and $zF_1$ are also adjacent in $\Gamma(G/F_1)$. Thus there exists an element $t \in G \setminus F_1$ such that $tF_1$ centralizes both $xF_1$ and $zF_1$. This is a contradiction since $G/F_1$ is a Frobenius group.
\end{proof}

Now, we show that for a solvable group with trivial center the converse of the previous result is also true.  Recall that Parker shows in \cite{parker} that the commuting graph of a solvable group $G$ with a trivial center is disconnected if and only if $G$ is a Frobenius group or a $2$-Frobenius group.

\begin{pro}\label{prop:commuting}
Let $G$ be a group with trivial center.
\begin{enumerate}[label=$(\arabic*)$]
\item 
$\Gamma (G)$ is connected if and only if $\Gamma_{\rm comm} (G)$ is connected.  Moreover, if $\Gamma(G)$ is connected of diameter $k$, then the $diam(\Gamma_{\rm comm}(G)) \leq 2k$.
\item 
When $G$ is solvable, $\Gamma(G)$ is disconnected if and only if $G$ is a Frobenius group or a $2$-Frobenius group.
\end{enumerate}
\end{pro}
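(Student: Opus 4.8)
The plan is to exploit the fact that, under the trivial-center hypothesis, $\Gamma_{\rm comm}(G)$ sits inside $\Gamma(G)$ as a spanning subgraph and that the two graphs are never more than a factor of two apart in distance. First I would record that $Z(G)=1$ forces the whole hypercenter to vanish: since $Z_{2}(G)/Z(G)=Z(G/Z(G))=Z(G)=1$, an easy induction along the upper central series gives $\hyp(G)=1$. Hence both $\Gamma(G)$ and $\Gamma_{\rm comm}(G)$ have vertex set $G\setminus\{1\}$, and since commuting elements generate an abelian (so nilpotent) subgroup, $\Gamma_{\rm comm}(G)$ is a spanning subgraph of $\Gamma(G)$, as already noted in the discussion preceding the statement.

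For part $(1)$, the implication from $\Gamma_{\rm comm}(G)$ connected to $\Gamma(G)$ connected is immediate from this spanning-subgraph observation. For the converse and the diameter bound, the key step is the claim that whenever $x$ and $y$ are adjacent in $\Gamma(G)$ they lie at distance at most $2$ in $\Gamma_{\rm comm}(G)$. Indeed, $N=\langle x,y\rangle$ is nilpotent and nontrivial, so $Z(N)\neq 1$; choosing $1\neq z\in Z(N)$ produces an element commuting with both $x$ and $y$. If $x$ and $y$ already commute they are adjacent in $\Gamma_{\rm comm}(G)$; otherwise $z\notin\{x,y\}$ (were $z=x$, then $x\in Z(N)$ would commute with $y$), so $x-z-y$ is a path of length $2$, and $z$ is a genuine vertex because $z\neq 1=Z(G)$. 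Now if $\Gamma(G)$ is connected of diameter $k$, I would take any two vertices, join them by a $\Gamma(G)$-path of length at most $k$, and replace each edge by a $\Gamma_{\rm comm}(G)$-path of length at most $2$; concatenating yields a walk of length at most $2k$, proving that $\Gamma_{\rm comm}(G)$ is connected with ${\rm diam}(\Gamma_{\rm comm}(G))\le 2k$.

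Part $(2)$ then follows by assembling part $(1)$ with the known characterization. By part $(1)$, for a centerless group $\Gamma(G)$ is disconnected if and only if $\Gamma_{\rm comm}(G)$ is disconnected; and when $G$ is solvable with trivial center, Parker's theorem (recalled just above the statement) says that $\Gamma_{\rm comm}(G)$ is disconnected if and only if $G$ is a Frobenius or a $2$-Frobenius group. Stringing these two equivalences together gives the claim. The only real work is the length-$2$ detour argument in part $(1)$; its main subtlety is guaranteeing that the mediating central element $z$ is distinct from $x$ and $y$ and is itself a vertex, both of which are clean consequences of $x$ and $y$ failing to commute and of $Z(G)$ being trivial. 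Everything else is bookkeeping on paths or a direct appeal to Parker's result.
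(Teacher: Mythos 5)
Your proposal is correct and follows essentially the same route as the paper: both arguments insert a nontrivial element of $Z(\langle x,y\rangle)$ to replace each edge of $\Gamma(G)$ by a path of length at most $2$ in $\Gamma_{\rm comm}(G)$, and both deduce part $(2)$ directly from Parker's theorem. Your treatment is in fact slightly more careful than the paper's, since you explicitly check that the mediating element $z$ is a genuine vertex distinct from $x$ and $y$ when they do not commute.
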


\begin{proof}
First observe that $\Gamma (G)$ and $\Gamma_{\rm comm} (G)$ have the same vertex set since $Z(G)$ is trivial.  To prove (1), notice that if $\Gamma_{\rm comm} (G)$ is connected, then clearly $\Gamma (G)$ is connected also. On the other hand, assume that $\Gamma (G)$ is connected. 
Suppose $x, y \in G \setminus \{ 1 \}$, then  there exist $k \in \N$ and $x_1, \ldots , x_{k-1} \in G \setminus \{ 1 \}$ such that $\langle x_i , x_{i+1} \rangle$ is nilpotent, with $x_0=x$, $x_{k}=y$, and $0 \leq i \leq k-1$. Therefore, for every integer $i \in \{0, \ldots, k-1\}$ there is a non-trivial central element $z_i$ in $\langle x_i , x_{i+1} \rangle$. 
Observe that $x = x_0$, $z_0$, $x_1$, $z_1$, $x_1$, $\dots$, $z_{k-1}$, $x_k$ is a path of length at most $2k$ in $\Gamma_{\rm comm} (G)$.  Thus $x$ and $y$ are connected by a path in $\Gamma_{\rm comm} (G)$ of length at most $2k$.

In order to prove (2), by Theorem 1.1 (i) of \cite{parker}, $\Gamma_{\rm comm}(G)$ is disconnected if and only if $G$ is a Frobenius group or a $2$-Frobenius group. Now, applying (1), we obtain the desired result.
\end{proof}

It is worth mentioning that if we suppose $Z (G) \neq \hyp (G)$, then $\Gamma (G)$ and $\Gamma_{\rm comm} (G)$ are always different because they have a different sets of vertices.  We now extend Proposition~\ref{prop:commuting} (2) to the case where $Z (G)$ is nontrivial.

\begin{thmC*}\label{thm:disconnected}
Let $G$ be a non-nilpotent solvable group. Then $\Gamma (G)$ is disconnected if and only if $G/\hyp (G)$ is a Frobenius group or a $2$-Frobenius group.
\end{thmC*}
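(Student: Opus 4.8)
The plan is to reduce the whole statement to the centerless case that has already been settled in \cref{prop:commuting}, using Theorem B to peel off the hypercenter. Write $\bar G = G/\hyp(G)$. Since $G$ is non-nilpotent we have $\hyp(G) \neq G$, so $\bar G$ is a nontrivial solvable group. The one structural fact I would record explicitly is that $\bar G$ is centerless: by the very construction of the upper central series, $Z\!\left(G/\hyp(G)\right)$ is the image in $\bar G$ of the next term of the series above $\hyp(G)$, and since $\hyp(G)$ is the terminal (hyper)center this next term is $\hyp(G)$ itself, so $Z(\bar G) = 1$.

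Next I would transfer the connectivity question along the canonical projection. By Theorem B the number of connected components of $\Gamma(G)$ equals the number of connected components of $\Gamma(\bar G)$; in particular $\Gamma(G)$ is disconnected if and only if $\Gamma(\bar G)$ is disconnected. Now $\bar G$ is solvable and has trivial center, so \cref{prop:commuting}(2) applies to it verbatim and yields that $\Gamma(\bar G)$ is disconnected if and only if $\bar G$ is a Frobenius group or a $2$-Frobenius group. Chaining the two equivalences gives precisely the claim: $\Gamma(G)$ is disconnected if and only if $G/\hyp(G)$ is a Frobenius group or a $2$-Frobenius group.

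The main point is that essentially all of the real content has already been established: \cref{prop:commuting}(2) packages Parker's theorem on commuting graphs, while Theorem B is what lets us replace $G$ by its centerless quotient $\bar G$. Consequently there is no genuine obstacle left in this argument beyond verifying that the hypotheses of the two cited results hold for $\bar G$, namely that $\bar G$ is solvable (immediate) and that $Z(\bar G)=1$ (the hypercenter computation above). The role of the non-nilpotency hypothesis is exactly to ensure that $\bar G$ is nontrivial, so that the degenerate situation in which $G$ is nilpotent and $G \setminus \hyp(G)$ is empty is excluded and the stated biconditional is meaningful.
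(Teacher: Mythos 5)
Your proof is correct and follows essentially the same route as the paper: reduce to the centerless quotient $G/\hyp(G)$ and invoke \cref{prop:commuting}(2), which packages Parker's theorem. The only cosmetic difference is that the paper re-derives the equivalence ``$\Gamma(G)$ connected $\iff$ $\Gamma(G/\hyp(G))$ connected'' by an explicit path-lifting argument via \cref{prop:adj}, whereas you cite Theorem B (equivalently, its corollary) directly, which is cleaner and equally valid.
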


\begin{proof}
First assume that $\Gamma (G)$ is disconnected, that is there exist elements $x, y \in G \setminus \hyp(G)$ which are not connected by any path in $\Gamma(G)$.  By Proposition \ref{prop:commuting} (2), since $G/\hyp(G)$ has trivial center it is sufficient to show that $\Gamma(G/\hyp(G))$ is disconnected to obtain the conclusion. 

By way of contradiction, assume that $\Gamma (G/\hyp (G))$ is connected. Notice that $x\hyp (G) \neq y \hyp(G)$ otherwise $\langle x, y \rangle \hyp(G)$ would be cyclic, which yields $\langle x,y \rangle$ nilpotent. This is impossible by the choice of $x$ and $y$.  Therefore there exists a path in  $\Gamma(G/\hyp(G))$ connecting $x\hyp(G)$ and $y\hyp(G)$, that is, there exist $n \in \N$ and $x_1, \ldots , x_n \in G \setminus \hyp (G)$ such that $\langle x_i \hyp (G), x_{i+1}\hyp (G) \rangle$ is nilpotent, with $x_0=x$, $x_{n+1}=y$ and $0 \leq i \leq n$. By Proposition \ref{prop:adj} $ x_i \hyp(G)$ and $x_{i+1}\hyp(G)$ are adjacent in $\Gamma (G/\hyp (G))$ if and only if $x_i$ and $x_{i+1}$ are adjacent in $\Gamma(G)$, therefore $x_0, \ldots , x_{n+1}$ realizes a path in $\Gamma (G)$ connecting $x$ and $y$, which is a contradiction.
    
Now, assume that $G/\hyp(G)$ is either a Frobenius group or a $2$-Frobenius group.  Then $\Gamma(G/\hyp(G))$ is disconnected by Proposition \ref{prop:commuting}, and there exist $x\hyp(G), y\hyp(G) \in G/\hyp(G)$ which are not connected. By contradiction assume that $\Gamma (G)$ is connected. Therefore there exist $n \in \N$ and $x_1, \ldots , x_n \in G \setminus \hyp(G)$ such that $\langle x_i , x_{i+1} \rangle$ is nilpotent, with $x_0=x$, $x_{n+1}=y$ and $0 \leq i \leq n$.  By Proposition \ref{prop:adj} $ x_i \hyp(G)$ and $x_{i+1}\hyp(G)$ are adjacent in $\Gamma(G/\hyp(G))$ if and only if $x_i$ and $x_{i+1}$ are adjacent in $\Gamma(G)$. This implies that $x_0 \hyp(G), \ldots, x_{n+1}\hyp(G)$ realizes a path in $\Gamma(G/\hyp(G))$ connecting $x\hyp(G)$ and $y\hyp(G)$, which is a contradiction.
\end{proof}

\section{Diameter of $\Gamma(G)$}

\noindent In this section we will provide some upper bounds for the diameter ${\rm diam} (\Gamma (G))$ of the graph $\Gamma (G)$. More precisely, we will consider non-nilpotent groups $G$ such that $\Gamma (G)$ is connected, and we will further assume that $G$ has trivial center due to the following result.

As a consequence of Theorem B and Theorem C we may assume that $G$ is neither Frobenius nor $2$-Frobenius. Going further, we can apply a result of Morgan and Parker to obtain items (1) and (2) of  Theorem E.  One should compare this result with Proposition 7.6 of \cite{BLN}.

\begin{cor}\label{cor:bound}
Let $G$ be a non-nilpotent group. Then the connected components of $\Gamma (G)$ have diameter is at most $10$. Moreover, if $G$ is solvable and $\Gamma (G)$ is connected, then ${\rm diam} (\Gamma (G)) \leq 8$.
\end{cor}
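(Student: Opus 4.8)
The plan is to reduce everything to the centerless quotient $\bar G = G/\hyp(G)$ and then transfer the known diameter bounds for the commuting graph. First I would invoke Theorem B: since the correspondence there preserves the diameter of every connected component whose diameter exceeds $1$ and sends components of diameter $1$ to components of diameter $0$ or $1$, in order to bound the diameter of each connected component of $\Gamma(G)$ by $10$ it suffices to bound the diameter of each connected component of $\Gamma(\bar G)$ by $10$; components of $\Gamma(G)$ of diameter at most $1$ already satisfy the bound. Note that $\hyp(\bar G) = 1$, so $\bar G$ has trivial center, and since $G$ is non-nilpotent we have $\bar G \ne 1$, so that $\Gamma(\bar G)$ actually has vertices.

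Working inside $\bar G$, I would compare $\Gamma(\bar G)$ with the commuting graph $\Gamma_{\rm comm}(\bar G)$. Because $Z(\bar G) = \hyp(\bar G)$, the two graphs have the same vertex set $\bar G \setminus \{1\}$, and $\Gamma_{\rm comm}(\bar G)$ is a spanning subgraph of $\Gamma(\bar G)$: any two commuting elements generate an abelian, hence nilpotent, subgroup. Moreover the argument in the proof of Proposition~\ref{prop:commuting}(1) is local, in that each edge of $\Gamma(\bar G)$ yields a path of length $2$ in $\Gamma_{\rm comm}(\bar G)$ through a nontrivial central element of the generated subgroup (which is a vertex precisely because $Z(\bar G)=1$); consequently the two graphs have exactly the same connected components. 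Since adding edges can only shorten distances, the diameter of every component of $\Gamma(\bar G)$ is at most the diameter of the corresponding component of $\Gamma_{\rm comm}(\bar G)$. The theorem of Morgan and Parker in \cite{MP} bounds the latter by $10$ for every centerless group, which, combined with the reduction of the first paragraph, establishes item~(1).

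For item~(2), assume $G$ is solvable and $\Gamma(G)$ is connected. Then $\bar G$ is solvable with trivial center, and by the corollary to Theorem B the graph $\Gamma(\bar G)$ is connected as well; hence $\Gamma_{\rm comm}(\bar G)$ is connected by Proposition~\ref{prop:commuting}(1). Parker's bound for the commuting graph of a solvable centerless group \cite{parker} gives ${\rm diam}(\Gamma_{\rm comm}(\bar G)) \le 8$, and the spanning-subgraph comparison of the previous paragraph yields ${\rm diam}(\Gamma(\bar G)) \le 8$. Transferring back through Theorem B (either the diameter of $\Gamma(G)$ is at most $1$, or it equals ${\rm diam}(\Gamma(\bar G))$) gives ${\rm diam}(\Gamma(G)) \le 8$.

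The argument is essentially an application of the deep diameter theorems for commuting graphs, so there is little genuinely new to do, and the only points requiring care are bookkeeping. I would make sure the reduction through Theorem B runs in the favorable direction, namely that we need only an upper bound and may discard components of small diameter, and that the inclusion $\Gamma_{\rm comm}(\bar G) \subseteq \Gamma(\bar G)$ \emph{decreases} distances rather than increasing them, which is exactly why the commuting-graph bounds transfer directly. The main subtlety, and the step I would double-check, is that the connected components of $\Gamma(\bar G)$ and $\Gamma_{\rm comm}(\bar G)$ genuinely coincide, so that the component-wise diameter comparison is legitimate.
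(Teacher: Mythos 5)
Your proposal is correct and follows essentially the same route as the paper: reduce to the centerless quotient $G/\hyp(G)$ via Theorem B, observe that the commuting graph is a spanning subgraph of $\Gamma(G/\hyp(G))$, and import the Morgan--Parker and Parker diameter bounds. Your extra care in checking that the connected components of $\Gamma(G/\hyp(G))$ and $\Gamma_{\rm comm}(G/\hyp(G))$ coincide (via the central element of each generated nilpotent subgroup) is a point the paper's proof passes over silently, and it is needed for the component-wise version of the bound.
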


\begin{proof}
By Theorem B, we have ${\rm diam} (\Gamma (G)) = {\rm diam} (\Gamma (G/\hyp(G)))$. Since $G/\hyp(G)$ has trivial center, Theorem 1.1 of \cite{MP} implies that the commuting graph of $G/\hyp(G)$ has diameter at most $10$. Observing that the commuting graph of $G/\hyp(G)$ is a subgraph of $\Gamma (G/\hyp (G))$, then ${\rm diam} (\Gamma (G)) \leq 10$ as ${\rm diam} (\Gamma (G)) = {\rm diam} (\Gamma (G/\hyp (G)))$.
    
Now assume that $G$ is solvable, using the same argument as before and applying Theorem 1.1 (ii) of \cite{parker}, we obtain the desired result.
\end{proof}

In \cite{parker}, Parker provided an example of a solvable group $H$ such that $\Gamma_{\rm comm} (H)$ has diameter $8$. However, $\Gamma (H)$ has diameter at most $7$, because any two vertices of $\Gamma (H)$ belonging in ${\rm Fit} (H)$ are adjacent.  This leaves open the question of whether there exists a solvable group $G$ such that ${\rm diam}(\Gamma(G)) = 8$.  Finally, we find the bound on the diameter of the connected components of $\Gamma (G)$ when $G$ is solvable and $\Gamma (G)$ is disconnected, which proves item (3) of Theorem E.

\begin{pro}
Let $G$ be a solvable group and suppose that $\Gamma (G)$ is disconnected.  Then the diameter of one connected component is at most $5$ and the other connected components have diameters at most $2$.
\end{pro}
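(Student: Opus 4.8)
The plan is to reduce to the quotient $\bar G=G/\hyp(G)$ and then read off the component structure from the results already established. Since $\Gamma(G)$ is disconnected, $G$ is non-nilpotent, so by Theorem~C the group $\bar G$ is either a Frobenius group or a $2$-Frobenius group; in both cases $\bar G$ has trivial centre, whence $\hyp(\bar G)=1$ and the vertices of $\Gamma(\bar G)$ are the elements of $\bar G\setminus\{1\}$. By Theorem~B the components of $\Gamma(G)$ and of $\Gamma(\bar G)$ are in bijection, with equal diameters whenever the diameter exceeds $1$; consequently it is enough to prove that $\Gamma(\bar G)$ has a single component of diameter at most $5$ and all other components of diameter at most $2$. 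Throughout I would use two facts: a nilpotent subgroup is a clique of $\Gamma(\bar G)$, and in a Frobenius group every nilpotent subgroup lies in the kernel or in a single conjugate of a complement.

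For the Frobenius case $\bar G=K\rtimes H$ the argument is short. The kernel $K$ is nilpotent, so $K\setminus\{1\}$ is a clique of diameter at most $1$; every other vertex lies in a conjugate $H^{x}$, and vertices in distinct conjugates are non-adjacent, so the remaining components are those of the nilpotent graph of $H$. I would invoke that a nontrivial Frobenius complement has nontrivial centre --- if $|H|$ is even it has a unique, hence central, involution, and if $|H|$ is odd then all Sylow subgroups are cyclic and all subgroups of order $pq$ are cyclic, forcing $H$ to be cyclic --- and then use any $z\in Z(H)\setminus\{1\}$ as a vertex adjacent to all others. Thus each such component is connected of diameter at most $2$, and in the Frobenius case every component has diameter at most $2$.

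For the $2$-Frobenius case I would keep the notation of the proof of Proposition~\ref{prop:frob2frob}: $F_1={\rm Fit}(\bar G)$, $F_2/F_1={\rm Fit}(\bar G/F_1)$, with $F_2=F_1\rtimes P$ Frobenius of nilpotent complement $P\cong F_2/F_1$ and $\bar G/F_1$ Frobenius of complement $\bar G/F_2$. As shown there, no vertex of $F_2\setminus F_1$ is adjacent to a vertex outside it; since each such vertex lies in a conjugate of the nilpotent group $P$, these conjugates are cliques and yield components of diameter at most $1$. The heart of the matter is the component $C$ containing the clique $F_1\setminus\{1\}$: I would prove it has diameter at most $5$ by showing that every vertex of $C$ lies at distance at most $2$ from $F_1\setminus\{1\}$, so that two vertices of $C$ are linked through that clique by a path of length at most $2+1+2=5$. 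For a vertex $g\in\bar G\setminus F_2$ this distance bound holds in two ways: if $g$ has a nontrivial $p$-part $g_{p}$ with $p\in\pi(F_1)$, then $g\sim g_{p}$ and, as $O_{p}(\bar G)\le F_1$ lies in every Sylow $p$-subgroup, $g_{p}$ is adjacent to a nontrivial element of $O_{p}(\bar G)\subseteq F_1$; and if $C_{F_1}(g)\neq 1$ then $g$ commutes with, hence is adjacent to, an element of $F_1\setminus\{1\}$.

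The remaining vertices are the $\pi(F_1)'$-elements of $\bar G\setminus F_2$ acting fixed-point-freely on $F_1$, and controlling them is the main obstacle: they are not adjacent to $F_1$, so I must rule out long chains dragging them away from $F_1$ and bound whatever components they do form by $2$. The tool is that if $g,g'\in\bar G\setminus F_2$ are adjacent, then $\langle g,g'\rangle$ is nilpotent with image in the Frobenius group $\bar G/F_1$ meeting the kernel trivially, hence contained in a single conjugate of the complement $\bar G/F_2$; moreover a coprime-action argument (the fixed-point-free part of $\langle g,g'\rangle$ centralises the nilpotent normal $\pi(F_1)$-subgroup $\langle g,g'\rangle\cap F_1$, forcing it to be trivial) shows that adjacent fixed-point-free vertices share a common conjugate of $\bar G/F_2$. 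There the nontrivial centre of that (Frobenius complement) conjugate serves as a local hub bounding distances by $2$, so these clusters form components of diameter at most $2$ disjoint from $C$; the delicate sub-case is when $\pi(F_1)$ and $\pi(\bar G/F_2)$ overlap, which I would handle by the same coprime argument to keep such clusters either disjoint from $C$ or attached within distance $2$. Translating back by Theorem~B then gives one component of $\Gamma(G)$ of diameter at most $5$ and all others of diameter at most $2$.
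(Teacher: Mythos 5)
Your overall strategy coincides with the paper's: reduce to $\bar G=G/Z_{\infty}(G)$ via Theorems~B and~C, dispose of the Frobenius case using the conjugates of the complement and its nontrivial centre, and in the $2$-Frobenius case show that $F_2\setminus F_1$ splits into cliques while the rest collapses onto the clique $F_1\setminus\{1\}$. The gap is precisely where you place the ``main obstacle'': the $\pi(F_1)'$-elements of $\bar G\setminus F_2$ acting fixed-point-freely on $F_1$. For these you offer only a plan. The conjugate of $\bar G/F_2$ that you want to use as a ``local hub'' is a subgroup of the quotient $\bar G/F_1$, not a set of vertices of $\Gamma(\bar G)$; its full preimage $(CF_1)^{h}$ is not nilpotent, and two fixed-point-free vertices lying in $(CF_1)^{h}$ may sit in different conjugates of $C^{h}$ inside $(CF_1)^{h}$, so no element is visibly adjacent to both. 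You also never decide whether these vertices form new components or belong to the component of $F_1\setminus\{1\}$, and the ``delicate sub-case'' where $\pi(F_1)$ meets $\pi(\bar G/F_2)$ is deferred to an unspecified coprime argument. As written, the proof is incomplete.

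The paper closes this case by a different and complete argument, which you should adopt (or replace by something equally explicit). By the Frattini argument $\bar G=F_1N_{\bar G}(H)$ with $H$ the nilpotent complement of $F_2$; since $H$ acts fixed-point-freely on $F_1$ one gets $N_{\bar G}(H)\cap F_1=1$, so $N_{\bar G}(H)\cong\bar G/F_1$ is a Frobenius group with kernel $H$ and \emph{cyclic} complement $C$. Every remaining vertex $g$ (a $\pi(F_1)'$-element of $\bar G\setminus F_2$) lies, after conjugation, in $CF_1$, and hence in a Hall $\pi(F_1)'$-subgroup of $CF_1$, which is conjugate into $C$; so one may assume $g\in C\setminus\{1\}$. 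Moreover $C$ cannot act fixed-point-freely on $F_1$, for otherwise $F_2C$ would be a Frobenius group with the non-nilpotent kernel $F_2$. Choosing $y\in C\setminus\{1\}$ with $C_{F_1}(y)\neq 1$ and $f\in C_{F_1}(y)\setminus\{1\}$ gives the path $g\sim y\sim f$ (the first edge because $\langle g,y\rangle\leq C$ is cyclic), so $g$ is at distance at most $2$ from $F_1\setminus\{1\}$. In particular the vertices you were worried about do not form separate components: they all lie in the component of $F_1\setminus\{1\}$, which consequently has diameter at most $2+1+2=5$, while the only other components are the conjugates of the complement(s), of diameter at most $2$.
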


\begin{proof}
Without loss of generality, we may assume that $\hyp (G) = 1$, thus $G$ is either a Frobenius group or a $2$-Frobenius group.  Let $F_1$ be the Fitting subgroup of $G$ ahd $F_2/F_1$, the Fitting subgroup of $G/F_1$.  We know that $F_2$ is a Frobenius group in either case, and if $H$ is a Frobenius complement for $F_2$, then $H \setminus \{ 1 \}$ is a connected component of $\Gamma (G)$.  We know $Z(H) > 1$ and we know every nonidentity element in $Z(H)$ will be adjacent to every element in $H \setminus \{ 1 \}$, so the connected component corresponding to $H \setminus \{ 1 \}$ will have diameter at most $2$.  Since every element in $F_2 \setminus F_1$ is conjugate to $H \setminus \{ 1 \}$, this accounts for all of the elements in $F_2 \setminus F_1$.  

If $G = F_2$, then the remaining connected component is $F_1 \setminus \{ 1 \}$ is a complete graph.  Thus, we may assume that we are in the case where $G$ is a $2$-Frobenius group, and the remaining connected component is $(G \setminus F_2) \cup (F_1 \setminus \{ 1 \})$.  We prove that every element in $G \setminus F_2$ is a distance at most $2$ from a point in $F_1 \setminus \{ 1 \}$, and this will prove the conclusion.  By the Frattini argument, we have that $G = F_1 N_G (H)$, and since every element in $F_2 \setminus F_1$ is conjugate to $H \setminus \{ 1 \}$, it follows that every element in $G \setminus F_2$ is conjugate to an element in $N_G (H) \setminus H$.  Thus, if $x \in G \setminus F_2$, then without loss of generality, we may assume that $x \in N_G (H) \setminus H$.  We know that $N_G (H)$ is a Frobenius group with Frobenius complement $C$ that is cyclic.  Thus, without loss of generality, we may assume that $x \in C$.  Now, we know that $C$ does not act Frobeniusly on $F_1$, so there exists $y \in C \setminus \{ 1 \}$ so that $C_{F_1} (y) > 1$.  This gives our path of distance at most $2$ from $x$ to a point in $F$.
\end{proof}

\noindent Now, we show that we can obtain a tighter bound on the diameter of $\Gamma (G)$ when we impose extra conditions on $G$.  Recall that the Fitting subgroup of a solvable group will be nontrivial, and so, the elements in ${\rm Fit} (G) \setminus \hyp (G)$ will lie in a single connected component of $\Gamma (G)$.  We now study this connected component.  We also show that if $Z(G)$ is trivial, all of the primes dividing $|G|$ divide $|{\rm Fit} (G)|$, and $\Gamma (G)$ is connected, then $\Gamma (G)$ has diameter at most $5$.

\begin{pro}\label{prop:coprime}
Let $G$ be a solvable group with trivial center. Then 
\begin{enumerate}[label=$(\arabic*)$]
\item ${\rm Fit} (G) \setminus \{1\}$ is contained in a connected component $\mathcal X$ of $\Gamma (G)$;
\item If $x$ is a non-trivial element of $G$ such that $(o(x), |{\rm Fit} (G)|) \neq 1$, then $x \in \mathcal X$ and there exists an element $w \in {\rm Fit} (G) \setminus \{1\}$ such that $d(x,w) \leq 2$.
\item If $\pi(G)=\pi({\rm Fit} (G))$ and $\Gamma (G)$ is connected, then ${\rm diam} (\Gamma (G)) \leq 5$. 
\end{enumerate}  
\end{pro}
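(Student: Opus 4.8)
The plan is to build everything around the single observation that ${\rm Fit}(G)$ is nilpotent, so its non-identity elements form a clique in $\Gamma(G)$, and then to show that every element covered by the hypotheses lies within distance $2$ of that clique. Throughout I use that $Z(G)=1$ forces $\hyp(G)=1$, so the vertex set of $\Gamma(G)$ is $G\setminus\{1\}$. For part (1), if $u,v\in{\rm Fit}(G)\setminus\{1\}$ are distinct then $\langle u,v\rangle$ is a subgroup of the nilpotent group ${\rm Fit}(G)$ and hence nilpotent, so $u$ and $v$ are adjacent. Thus ${\rm Fit}(G)\setminus\{1\}$ is a clique and lies in a single connected component, which I name $\mathcal X$.

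For part (2), let $p$ be a prime dividing both $o(x)$ and $|{\rm Fit}(G)|$, let $x_p$ be the $p$-component of $x$ (so $x_p\neq 1$), and note $O_p(G)\neq 1$ since $p\mid|{\rm Fit}(G)|$. The structural point is that $\langle x_p\rangle O_p(G)$ is a $p$-group: it is a subgroup because $O_p(G)\trianglelefteq G$, and modulo $O_p(G)$ it is cyclic of $p$-power order, so as an extension of a $p$-group by a $p$-group it is a $p$-group, hence nilpotent. Consequently $x_p$ is adjacent to (or equal to) every element of $O_p(G)\setminus\{1\}$. Moreover $x_p\in\langle x\rangle$, so $\langle x,x_p\rangle=\langle x\rangle$ is cyclic and $x$ is adjacent to (or equal to) $x_p$. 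If $x_p\in O_p(G)$ I simply take $w=x_p\in{\rm Fit}(G)\setminus\{1\}$, giving $d(x,w)\leq 1$; otherwise I choose any $w\in O_p(G)\setminus\{1\}$, and the chain through $x_p$ gives a path from $x$ to $w$ of length at most $2$. Since $w\in\mathcal X$ by part (1), this also shows $x\in\mathcal X$.

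For part (3), the hypothesis $\pi(G)=\pi({\rm Fit}(G))$ guarantees that every nontrivial $x\in G$ satisfies $(o(x),|{\rm Fit}(G)|)\neq 1$, so part (2) applies to every vertex: each $x$ lies within distance $2$ of some $w_x\in{\rm Fit}(G)\setminus\{1\}$. Given arbitrary vertices $x,y$, the elements $w_x,w_y$ both lie in the clique of part (1), so $d(w_x,w_y)\leq 1$, and the triangle-type estimate
\[
d(x,y)\leq d(x,w_x)+d(w_x,w_y)+d(w_y,y)\leq 2+1+2=5
\]
yields ${\rm diam}(\Gamma(G))\leq 5$. (In fact part (2) already forces $\Gamma(G)=\mathcal X$ to be connected, so the connectivity hypothesis is automatic, but it is harmless to keep it.)

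The routine pieces are (1) and (3); the step demanding care is (2). There the one genuinely structural ingredient is verifying that $\langle x_p\rangle O_p(G)$ really is a $p$-group, after which the coprime/cyclic bookkeeping already used in \cref{lem:intersection} applies. The remaining subtlety is handling the degenerate cases correctly, namely when $x=x_p$, when $x_p$ already lies in $O_p(G)$ (so one takes $w=x_p$), or when $|O_p(G)|=p$, so as to guarantee both that $w$ can be chosen distinct from $x_p$ when a length-$2$ path is needed and that each claimed edge is a genuine edge rather than a loop.
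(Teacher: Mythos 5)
Your proof is correct and follows essentially the same route as the paper's: treat ${\rm Fit}(G)\setminus\{1\}$ as a clique, connect each relevant $x$ to it through its $p$-component for a prime $p$ dividing both $o(x)$ and $|{\rm Fit}(G)|$, and concatenate two such length-$\leq 2$ paths with one edge inside the clique to get the bound $5$. The only (immaterial) difference is that in part (2) you place $x_p$ and $w$ together inside the $p$-group $\langle x_p\rangle O_p(G)$, whereas the paper puts them in a common Sylow $p$-subgroup $P$ and uses $P\cap{\rm Fit}(G)\neq 1$; your side remark that the hypothesis $\pi(G)=\pi({\rm Fit}(G))$ already forces connectivity is also correct.
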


\begin{proof}
As ${\rm Fit} (G)$ is a nontrivial, nilpotent subgroup of $G$, it is clear that ${\rm Fit} (G) \setminus \{ 1 \}$ is contained in a connected component $\mathcal X$ of $\Gamma (G)$, and (1) is clear.  Now, take an element $x \in G \setminus \{1\}$ and a prime $p$ such that $p$ divides $(o(x), |{\rm Fit} (G)|)$. Of course $x$ is adjacent to its $p$-component, $x_p$.  Let $P$ be a Sylow $p$-subgroup of $G$ containing $x_p$ and observe that $P \setminus \{1\}$ is a clique of $\Gamma(G)$.  Moreover, as $p$ divides $|{\rm Fit} (G)|$, there exists a nontrivial element $w \in P \cap {\rm Fit} (G)$. Thus, $d(x,w) \leq 2$ since both $x$ and $w$ are adjacent to $x_p$.   Consider elements $x,y \in G \setminus \{ 1 \}$. If $\pi (G) = \pi ({\rm Fit} (G))$, then applying conclusion (2), we can find $w_1, w_2 \in {\rm Fit} (G) \setminus \{ 1 \}$ so that $d(x,w_1) \le 2$ and $d (y,w_2) \le 2$.  Since $w_1$ and $w_2$ will be adjacent in $\Gamma (G)$, the result follows.
\end{proof}

We next show that if $\Gamma (G)$ is connected and $|G:{\rm Fit} (G)|$ is a prime, then $\Gamma (G)$ has diameter at most $3$.

\begin{pro}\label{prop:primeindex}
 Let $G$ be a non-nilpotent group and $F = {\rm Fit} (G)$ such that $|G:F| = p$ with $p$ prime. If $\Gamma (G)$ is connected, then ${\rm diam}(\Gamma (G)) \leq 3$.
\end{pro}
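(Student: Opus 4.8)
The plan is to reduce the whole statement to a single combinatorial claim: \emph{every vertex outside $F$ has a neighbor inside $F\setminus\{1\}$ in $\Gamma(G)$}. Granting this, the bound will follow immediately. First I would observe that $G$ is automatically solvable, being an extension of the nilpotent group $F={\rm Fit}(G)$ by the cyclic group $G/F$ of order $p$. Using Theorem B together with $\hyp(G)\le {\rm Fit}(G)$, I would replace $G$ by $\bar G=G/\hyp(G)$: one checks that $\bar G$ is still non-nilpotent (otherwise $\hyp(\bar G)=\bar G$, whereas $\hyp(\bar G)=1$), that ${\rm Fit}(\bar G)=F/\hyp(G)$ still has index $p$ in $\bar G$ (the only alternative, ${\rm Fit}(\bar G)=\bar G$, would force $\bar G$ nilpotent), and that $\Gamma(\bar G)$ is connected with the same diameter as $\Gamma(G)$ whenever the latter is at least $2$ (if it is at most $1$ there is nothing to prove). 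Hence I may assume $Z(G)=\hyp(G)=1$, so the vertex set of $\Gamma(G)$ is $G\setminus\{1\}$ and, by Proposition \ref{prop:frob2frob}, $G$ is neither Frobenius nor $2$-Frobenius.

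Assuming the claim, the bound is quick. Since $F$ is nilpotent, $F\setminus\{1\}$ is a clique of $\Gamma(G)$. Given vertices $x,y$, choose $a,b\in F\setminus\{1\}$ with $x$ equal or adjacent to $a$ and $y$ equal or adjacent to $b$ (take $a=x$ if $x\in F$, and invoke the claim otherwise; similarly for $b$). Then $x,a,b,y$ is a walk of length at most $3$, so $d(x,y)\le 3$ and ${\rm diam}(\Gamma(G))\le 3$. This already sharpens the bound of $5$ available from Proposition \ref{prop:coprime}.

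The heart of the matter is the claim, which I would prove by analyzing the $p$-part of a vertex $x\in G\setminus F$. Since $xF$ generates $G/F\cong C_p$, its $p'$-part is trivial, so the $p'$-part $x_{p'}$ of $x$ lies in $F$. If $x$ is \emph{not} a $p$-element, then $1\neq x_{p'}\in\langle x\rangle\cap F$, and $x$ is adjacent to $x_{p'}\in F\setminus\{1\}$ because $\langle x,x_{p'}\rangle=\langle x\rangle$ is cyclic. This disposes of every vertex except the $p$-elements. For a $p$-element $x\notin F$ I would fix a Sylow $p$-subgroup $P\ni x$ and set $Q=O_p(G)\trianglelefteq P$. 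If $Q\neq 1$, then $Q$ meets $Z(P)$ nontrivially, and any $1\neq z\in Z(P)\cap Q\subseteq F$ commutes with $x$, so $\langle x,z\rangle$ is abelian and $x$ is adjacent to $z\in F\setminus\{1\}$.

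The remaining subcase $Q=O_p(G)=1$ is the most delicate, and I expect it to be the main obstacle. Here $F$ is a $p'$-group, $|P|=p$, and $G=F\rtimes P$; this is precisely the configuration where connectivity must be used. If a generator of $P$ acted fixed-point-freely on $F$, then $G$ would be a Frobenius group, contradicting the connectivity of $\Gamma(G)$ through Proposition \ref{prop:frob2frob}. Hence $C_F(P)\neq 1$, and since every $p$-element generates a Sylow $p$-subgroup conjugate to $P$ while ``having a neighbor in the normal subgroup $F$'' is conjugation-invariant, $x$ centralizes some $1\neq f\in F$, producing the edge $x\sim f$. This completes the claim. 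The two genuinely delicate points are this last subcase, the only place where the hypothesis that $\Gamma(G)$ is connected (equivalently, that $G$ is not Frobenius) is actually needed, and the verification that passing to $G/\hyp(G)$ preserves the index-$p$ Fitting hypothesis rather than collapsing $G$ to a nilpotent group.
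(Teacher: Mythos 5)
Your proposal is correct and follows essentially the same route as the paper: reduce to $\hyp(G)=1$ via Theorem B, use that $F\setminus\{1\}$ is a clique, and show every vertex outside $F$ is adjacent to a nontrivial element of $F$ by splitting into the cases where the $p'$-part of $x$ is nontrivial, where $O_p(G)\neq 1$ (i.e.\ $p$ divides $|F|$), and where $F$ is a $p'$-group and non-Frobeniusness supplies a fixed point. The only cosmetic difference is that you organize the cases by the $p$-part of $x$ and by $O_p(G)$ rather than by whether $x^p=1$ and whether $p$ divides $|F|$, which amounts to the same analysis.
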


\begin{proof}
Assume first that the center of $G$ is trivial.  Since any two distinct elements of $F$ are adjacent, it is sufficient to prove that for every element $x \in G \setminus \{ 1 \}$ there exists $w \in F \setminus \{ 1 \}$ such that $d (x,w) = 1$.  Assume $x \not \in F$. If $x^p \neq 1$, then $x^p \in F \setminus \{ 1 \}$ and $d (x, x^p) = 1$. Thus, suppose that $x^p = 1$.  If $p$ does not divide $|F|$, then $G = F\langle x \rangle$, and it is not a Frobenius group since $\Gamma (G)$ is connected. Therefore there exists an element $w \in F \setminus \{ 1 \}$ such that $[x,w] = 1$.  If $p$ divides $|F|$, then there exists $P$ a Sylow $p$-subgroup of $G$ such that $x \in P$ and $P \cap F \neq \{ 1\}$.  It follows that there exists $w \in F \setminus \{ 1 \}$ such that $\langle x, w \rangle $ is nilpotent. 

Now assume that the center of $G$ is not trivial. Then by Theorem B, $\Gamma (G/\hyp (G))$ is connected and ${\rm diam} (\Gamma (G)) = {\rm diam} \Gamma (G/\hyp (G))$.  Notice that $F/\hyp(G))$ has prime index in  $G/\hyp(G)$. The statement is now clear.
\end{proof}

This bound is the best possible, in fact the group $G = $ SmallGroup (54,5) has Fitting Subgroup of order $27$ and $\Gamma (G)$ is connected of diameter $3$.

Recall that a group $G$ is said to be an $AC$-group if every non-trivial element of $G$ has abelian centralizer.  Solvable $AC$-groups have been classified by Schmidt in \cite{schmidt}.

\begin{pro}\label{prop:ac-group}
If $G$ is a solvable, non-nilpotent $AC$-group, then $\Gamma (G)$ is disconnected.  
\end{pro}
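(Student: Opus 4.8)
The plan is to deduce the statement from Theorem C, which reduces everything to understanding the quotient $\bar G := G/\hyp(G)$. Since $G$ is solvable and non-nilpotent, by Theorem C it suffices to prove that $\bar G$ is a Frobenius group or a $2$-Frobenius group. First I would record the soft facts that hold for any group: $\bar G$ has trivial center, because the upper central series of $G$ stabilizes exactly at $\hyp(G)$, so $Z(\bar G) = 1$; and $\bar G$ is non-abelian, since otherwise $\bar G = Z(\bar G) = 1$ would force $G = \hyp(G)$ to be nilpotent, contrary to hypothesis.

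The engine of the argument is Schmidt's classification of solvable $AC$-groups \cite{schmidt}. I would use it to show that for a non-abelian solvable $AC$-group, $G/Z(G)$ is either a $p$-group, a Frobenius group, or a $2$-Frobenius group. Non-nilpotency of $G$ then rules out the $p$-group case: a nilpotent $AC$-group is a direct product $P \times A$ with $P$ a (possibly non-abelian) Sylow subgroup and $A$ abelian, so that $G/Z(G) = P/Z(P)$ is a $p$-group, and hence $G/Z(G)$ being a $p$-group would force $G$ nilpotent. Therefore $G/Z(G)$ is Frobenius or $2$-Frobenius. Both of these classes have trivial center, so $Z(G/Z(G)) = 1$, which means $Z_2(G) = Z(G)$ and therefore $\hyp(G) = Z(G)$. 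Consequently $\bar G = G/\hyp(G) = G/Z(G)$ is a Frobenius group or a $2$-Frobenius group, and Theorem C yields that $\Gamma(G)$ is disconnected.

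A more self-contained way to finish, once one knows that $\bar G$ is again an $AC$-group with trivial center, is to observe that in any $AC$-group with trivial center the commuting relation is transitive on nonidentity elements (if $x,z \in C_{\bar G}(y)$ and this centralizer is abelian, then $[x,z]=1$), so $\Gamma_{\rm comm}(\bar G)$ is a disjoint union of at least two cliques and is disconnected; then \Cref{prop:commuting}, applied to $\bar G$, which has trivial center, gives that $\Gamma(\bar G)$ is disconnected, and the corollary to Theorem B transfers this conclusion back to $\Gamma(G)$. In either packaging, the main obstacle is the same: controlling the quotient $G/\hyp(G)$, that is, verifying that the $AC$ property together with non-nilpotency forces $\hyp(G) = Z(G)$ and $G/Z(G)$ to be Frobenius or $2$-Frobenius. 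This is precisely the point where Schmidt's classification is indispensable, since the $AC$ property is not inherited by arbitrary quotients; the bulk of the write-up will therefore consist in reading off the relevant cases of that classification and checking that the nilpotent ($p$-group) case is the only one excluded by our hypotheses.
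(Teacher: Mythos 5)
Your argument is essentially the paper's: both rest on Schmidt's Satz 5.12 together with Theorem~C (the paper additionally routes one case through Proposition~\ref{prop:frob2frob}), and your observation that a Frobenius or $2$-Frobenius quotient $G/Z(G)$ has trivial center, forcing $\hyp(G)=Z(G)$, is exactly how the paper finishes. The one point to be careful about is that Satz 5.12, as cited in the paper, lists ``$G$ has an abelian normal subgroup $A$ of prime index'' as a separate alternative rather than as an instance of ``$G/Z(G)$ is a $p$-group or a Frobenius group''; to absorb that case into your trichotomy you still need the short argument the paper supplies there, namely that non-nilpotency and the $AC$ property force $C_G(x)=A$ for every $1\neq x\in A$, so that $G$ is Frobenius with kernel $A$. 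With that step written out, your main route is complete and coincides with the paper's; your proposed alternative finish via transitivity of commuting is appealing but, as you note, hinges on the same control of $G/\hyp(G)$, so it does not avoid the appeal to Schmidt's classification.
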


\begin{proof}
By \cite[Satz 5.12]{schmidt}, either $G$ has an abelian normal subgroup of prime index, $G/Z(G)$ is Frobenius or $2$-Frobenius, or $G$ is nilpotent. Assume that $G$ has an abelian normal subgroup $A$ of prime index. Clearly, $A$ is the Fitting subgroup of $G$ and $C_G (x) = A$ for every element $x \in A \setminus \{ 1 \}$. Indeed, $C_G(x) < G$ and $|G:A|$ is prime. Hence $G$ is a Frobenius group and Proposition \ref{prop:frob2frob} implies that $\Gamma (G)$ is disconnected.
     
If $G/Z(G)$ is Frobenius or $2$-Frobenius, then $Z (G) = hyp (G)$ and Theorem C implies that $\Gamma (G)$ is disconnected. Since $G$ is not nilpotent, we are done. 
\end{proof}

We now show that $A$-groups are the only groups where $\Gamma (G)$ and $\Gamma_{\rm comm} (G)$ coincide, which is completely coherent with Theorem \ref{thm:agroup}.

\begin{thmD*}\label{prop:agroup}
Let $G$ be a group. Then $\Gamma(G)$ coincides with $\Gamma_{\rm comm}(G)$ if and only if $G$ is an $A$-group.
\end{thmD*}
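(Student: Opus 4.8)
The plan is to translate the equality of the two graphs into the single set-theoretic identity ${\rm Nil}_G(x) = C_G(x)$, valid for every $x \in G$, and then invoke Theorem \ref{thm:agroup}. Observe that $\Gamma(G)$ and $\Gamma_{\rm comm}(G)$ coincide precisely when (a) they have the same vertex set, i.e. $Z(G) = \hyp(G)$, and (b) two non-central elements $x,y$ generate a nilpotent subgroup if and only if they commute. Since $y \in {\rm Nil}_G(x)$ means $\langle x,y\rangle$ is nilpotent while $y \in C_G(x)$ means $xy=yx$, condition (b) is exactly the assertion that ${\rm Nil}_G(x) = C_G(x)$ for all non-central $x$; moreover commuting always produces an abelian, hence nilpotent, subgroup, so the inclusion $C_G(x) \subseteq {\rm Nil}_G(x)$ is automatic and only the reverse inclusion is at stake. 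Thus the whole problem reduces to controlling when ${\rm Nil}_G(x) = C_G(x)$.

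For the direction $(\Leftarrow)$, I would suppose $G$ is an $A$-group. By Theorem \ref{thm:agroup} we have ${\rm Nil}_G(x) = C_G(x)$ for every element $x$ of prime power order; using Lemma \ref{lem:intersection} together with the analogous primary decomposition $C_G(x) = \bigcap_{i} C_G(x_i)$ of the centralizer, this identity propagates to every $x \in G$. To obtain the vertex set equality I would use the description of the universal vertices: by Proposition 2.1 of \cite{AZ}, $\hyp(G)$ is exactly the set of $z$ with ${\rm Nil}_G(z) = G$, which under ${\rm Nil}_G(z) = C_G(z)$ becomes the set of $z$ with $C_G(z) = G$, namely $Z(G)$. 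Hence $Z(G) = \hyp(G)$, condition (a) holds, and condition (b) is immediate from ${\rm Nil}_G(x) = C_G(x)$, so the two graphs coincide.

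For $(\Rightarrow)$, I would assume the graphs coincide. Equality of vertex sets already forces $Z(G) = \hyp(G)$ (as noted in the discussion preceding the statement), so the non-central elements are precisely the vertices of both graphs. I would then mimic the converse argument of Theorem \ref{thm:agroup}: fix a Sylow $p$-subgroup $P$ and take $x,y \in P$. If either lies in $Z(G)$ they commute trivially; otherwise both are vertices and $\langle x,y\rangle \le P$ is a $p$-group, hence nilpotent, so $x$ and $y$ are adjacent in $\Gamma(G)$ and therefore adjacent in $\Gamma_{\rm comm}(G)$, that is $xy = yx$. Thus $P$ is abelian and $G$ is an $A$-group.

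The step I expect to require the most care is the vertex set equality $Z(G) = \hyp(G)$ in the $(\Leftarrow)$ direction: it is tempting to try to read it off directly from the abelianity of the Sylow subgroups, but the clean route is to pass through the universal-vertex characterization of the hypercenter from \cite{AZ} together with the identity ${\rm Nil}_G(z) = C_G(z)$. Everything else is a direct dictionary between adjacency in the two graphs and the identity ${\rm Nil}_G(x) = C_G(x)$, which Theorem \ref{thm:agroup} and Lemma \ref{lem:intersection} already put at our disposal.
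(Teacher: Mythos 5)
Your proposal is correct, and its forward direction ($\Gamma(G)=\Gamma_{\rm comm}(G)$ implies $A$-group) is essentially identical to the paper's: take two elements of a Sylow $p$-subgroup, note they are adjacent in $\Gamma(G)$ and hence commute. The backward direction is where you diverge. The paper argues directly from the single fact that every nilpotent subgroup of an $A$-group is abelian: this immediately gives $\hyp(G)=Z(G)$ (if $x\in\hyp(G)$ then $\langle x,y\rangle$ is nilpotent, hence abelian, for all $y$) and the coincidence of the edge sets. You instead route through Theorem \ref{thm:agroup} to get ${\rm Nil}_G(x)=C_G(x)$ for elements of prime power order, upgrade this to all $x$ via Lemma \ref{lem:intersection} together with the primary decomposition $C_G(x)=\bigcap_i C_G(x_i)$, and then read off $\hyp(G)=Z(G)$ from the universal-vertex description of the hypercenter in \cite{AZ}. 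Both arguments are sound; yours has the merit of reusing machinery already established in Section \ref{N} and of isolating the clean identity ${\rm Nil}_G(x)=C_G(x)$ as the real content of the equivalence, while the paper's is shorter and self-contained, resting on one standard structural fact about $A$-groups. One cosmetic point: in the forward direction you should (as the paper also tacitly does) note that the case $x=y$ is trivial, since adjacency is only defined for distinct vertices.
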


\begin{proof}
Suppose that $\Gamma (G)$ coincides with $\Gamma_{\rm comm} (G)$. Clearly the center of $G$ coincides with the hypercenter of $G$. Let $P$ be a Sylow $p$-subgroup of $G$. If $P$ is contained in the center of $G$, then it is abelian. Thus, assume that $P$ is not contained in the center of $G$ and let $x, y \in P \setminus Z(G)$. Since $\langle x,y\rangle$ in nilpotent, it follows that $x$ and $y$ are adjacent in $\Gamma(G)$. Thus, by the hypothesis, they also commute and therefore $P$ is abelian.
    
Conversely, assume that $G$ is an $A$-group.  As a consequence, any nilpotent subgroup of $G$ is abelian.  We now show that the hypercenter of $G$ coincides with the center $Z(G)$ of $G$ because if $x \in \hyp(G)$ then for every $y \in G$ we have $\langle x, y \rangle$ is nilpotent, thus $[x,y]=1$ and $x \in Z(G)$. Hence $\Gamma(G)$ and the commuting graph of $G$ have the same set of vertices. Moreover any two vertices $x$ and $y$ are adjacent in $\Gamma(G)$ if and only if they are adjacent in $\Gamma_{\rm comm} (G)$. This concludes the proof.
\end{proof}

From Theorem D and \cite[Theorem 1.1]{CL-agrp} it follows that for an $A$-group $G$ the diameter of $\Gamma(G)$ is at most $6$.

Next, we show that if $G$ is non-nilpotent and has the form $N \rtimes A$ with $Z(G) = \hyp (G)$ where $N$ is cyclic and $A$ is abelian, then $\Gamma (G)$ has diameter at most $4$.

\begin{pro}\label{prop:cyclic-by-ab}
Let $G = N \rtimes A$ a non-nilpotent group with $N$ cyclic and $A$ abelian. If $\Gamma (G)$ is connected and $Z (G) = \hyp (G)$, then $\Gamma (G)$ has diameter at most $4$. 
\end{pro}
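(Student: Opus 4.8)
\emph{The plan.} Since the bound is vacuous when ${\rm diam}(\Gamma(G))\le 1$, I would assume ${\rm diam}(\Gamma(G))\ge 2$ and first reduce to the centreless case. By Theorem~B we have ${\rm diam}(\Gamma(G))={\rm diam}(\Gamma(G/\hyp(G)))$, and since $\hyp(G)=Z(G)$ I want the quotient $\overline G=G/Z(G)$ to keep the same shape. One checks that $Z(G)=C_N(A)\times C_A(N)$, so that $\overline G=\overline N\rtimes\overline A$ with $\overline N=NZ(G)/Z(G)$ cyclic and normal, $\overline A=AZ(G)/Z(G)$ abelian, and $Z(\overline G)=1$. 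Thus I may assume $Z(G)=1$, whence $A$ acts faithfully on $N$ and $C_N(A)=1$. As $\Gamma(G)$ is connected and $G$ is centreless, Theorem~C together with Proposition~\ref{prop:commuting} shows $G$ is neither a Frobenius nor a $2$-Frobenius group; in particular $A$ does \emph{not} act fixed-point-freely on $N$, so there is a nontrivial $a_0\in A$ with $C_N(a_0)\ne 1$.

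\emph{Choice of a hub.} The entire argument is organised around a single vertex: I will show that $d(x,w^\ast)\le 2$ for \emph{every} vertex $x$, which gives ${\rm diam}(\Gamma(G))\le 4$ at once. I fix $w^\ast\in C_N(a_0)\setminus\{1\}\subseteq N$. Two facts will be used repeatedly: $N$ is abelian, so $N\setminus\{1\}$ is a clique of $\Gamma(G)$; and $a_0$ commutes with $w^\ast$.

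\emph{Reaching the hub.} Given a vertex $x$, I split according to whether $x$ fixes a nontrivial point of $N$. If $C_N(x)\ne 1$, choose $v\in C_N(x)\setminus\{1\}$; then $\langle x,v\rangle$ is abelian, so $x\sim v$, and since $v,w^\ast\in N\setminus\{1\}$ also $v\sim w^\ast$, giving $d(x,w^\ast)\le 2$. Suppose instead $x$ acts fixed-point-freely on $N$. Writing $x=na$ with $n\in N$, $a\in A$, the conjugation action of $x$ on $N$ coincides with that of $a$, so $a$ acts without nontrivial fixed points. Using that $N$ is abelian, the map $m\mapsto m\,({}^{a}m)^{-1}$ (with ${}^{a}m=ama^{-1}$) is an injective endomorphism of $N$, hence surjective; picking $m\in N$ with $m\,({}^{a}m)^{-1}=n$ yields $x={}^{m}a:=mam^{-1}$. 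Thus $x$ lies in the abelian subgroup ${}^{m}A$, which also contains ${}^{m}a_0$, so $x\sim {}^{m}a_0$. Moreover conjugation by $m$ fixes $w^\ast$ (as $w^\ast,m\in N$ and $N$ is abelian), whence $[{}^{m}a_0,w^\ast]={}^{m}[a_0,w^\ast]=1$ and ${}^{m}a_0\sim w^\ast$. Again $d(x,w^\ast)\le 2$, and the two cases together yield the claim.

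\emph{Where the difficulty lies.} The routine case is the one with a fixed point; the crux is the fixed-point-free case, and in particular the observation that such an $x$ is conjugate, \emph{by an element of $N$}, to its $A$-component $a$. This is the Frobenius-type surjectivity of $m\mapsto m\,({}^{a}m)^{-1}$, which relies essentially on $N$ being abelian and on $a$ having no nonzero fixed vector. The second point that makes a single hub work uniformly is that the \emph{same} conjugator $m$ carries the witness $a_0$ into the abelian group ${}^{m}A$ containing $x$, while keeping it centralising $w^\ast$ (for which one only needs $w^\ast\in N$ and $N$ abelian). The non-Frobenius hypothesis, extracted from connectivity via Theorem~C, is exactly what supplies the witness $a_0$; without it the fixed-point-free elements could fail to lie within distance $2$ of any common vertex. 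I expect the remaining work to be the careful handling of the degenerate equalities (for instance $x=v$, $v=w^\ast$, or ${}^{m}a_0=x$), each of which only improves the distance, and the verification of the semidirect decomposition of $\overline G$ in the reduction step.
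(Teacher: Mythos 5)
Your proof is correct, but it takes a genuinely different route from the paper. The paper's argument is essentially a citation: it uses Proposition~\ref{prop:ac-group} to see that $G$ is not an $AC$-group, then invokes Velten's Theorem~1.2 to conclude that $\Gamma_{\rm comm}(G)$ is connected of diameter at most $4$, and finishes by observing that $\Gamma_{\rm comm}(G)$ is a spanning subgraph of $\Gamma(G)$ when $Z(G)=\hyp(G)$. You instead give a self-contained ``hub'' argument: after reducing to $Z(G)=1$ via Theorem~B (your computation $Z(G)=C_N(A)\times C_A(N)$ and the induced semidirect decomposition of $G/Z(G)$ both check out), you extract from connectivity the existence of $a_0\in A\setminus\{1\}$ with $C_N(a_0)\neq 1$ (since otherwise $N\rtimes A$ would be Frobenius, contradicting Theorem~C), and show every vertex lies within distance $2$ of a fixed $w^\ast\in C_N(a_0)\setminus\{1\}$. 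The crux, correctly executed, is that an element $x=na$ with $C_N(x)=1$ acts on $N$ as $a$ does, so the coboundary map $m\mapsto m({}^am)^{-1}$ is bijective and $x$ is $N$-conjugate to $a$, placing $x$ in the abelian subgroup ${}^mA$ together with ${}^ma_0$, which still centralizes $w^\ast$. Two remarks on what each approach buys: your argument in fact reproves the relevant portion of Velten's theorem, since every edge you construct is a commuting edge, so you obtain ${\rm diam}(\Gamma_{\rm comm}(G))\le 4$ as well; and it nowhere uses that $N$ is cyclic rather than merely abelian, so it establishes the proposition in the greater generality of abelian-by-abelian groups. The price is length; the paper's proof is two lines but rests on an external preprint.
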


\begin{proof}
From $\Gamma(G)$ connected and Proposition \ref{prop:ac-group}, it follows that $G$ is not an $AC$-group.  Therefore, there exists a noncentral element $x \in G$ such that $C_G(x)$ is non-abelian. By Theorem 1.2 (a) of \cite{velten}, $\Gamma_{\rm comm}(G)$ is connected and by Theorem 1.2 (b) of \cite{velten}, it has diameter at most $4$. Therefore, also $\Gamma(G)$ has diameter at most $4$. 
\end{proof}

We point out that this bound is sharp. Indeed the group $G = \langle x \rangle \rtimes \langle y \rangle$ with $x^{15} = y^4 = 1$ and $x^y = x^8$ satisfies the hypothesis of the previous result and $\Gamma (G)$ is connected of diameter $4$.

We now show that if ${\rm Fit} (G)$ is cyclic, then $\Gamma (G)$ has diameter at most $5$.

\begin{pro}\label{prop:cyclicfit}
Let $G$ be a solvable group with trivial center such that $\Gamma(G)$ is connected.   If the Fitting subgroup of $G$ is cyclic, then the diameter of $\Gamma(G)$ is at most $5$.
\end{pro}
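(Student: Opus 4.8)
The plan is to connect every vertex to the \emph{Fitting clique} $F\setminus\{1\}$, where $F={\rm Fit}(G)$. Since $F$ is cyclic it is abelian, so any two elements of $F$ generate an abelian (hence nilpotent) subgroup and $F\setminus\{1\}$ is a clique of $\Gamma(G)$. Consequently, if I can show that every nontrivial $x$ satisfies $d(x,F\setminus\{1\})\le 2$, then for arbitrary vertices $x,y$ I obtain $d(x,y)\le 2+1+2=5$, which is exactly the assertion. Thus the entire proof reduces to the local statement: every nontrivial element lies within distance $2$ of $F\setminus\{1\}$.

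First I record the structural consequences of $F$ being cyclic. As $G$ is solvable with $Z(G)=1$ we have $C_G(F)=F$, and since $F$ is cyclic the group $\mathrm{Aut}(F)$ is abelian; hence $G/F\hookrightarrow\mathrm{Aut}(F)$ is abelian. In particular ${\rm Fit}(G/F)=G/F$, so $G$ cannot be a $2$-Frobenius group, because in a $2$-Frobenius group the quotient by the Fitting subgroup is a nonabelian Frobenius group. Since $Z(G)=1$ forces $\hyp(G)=1$, Theorem~C now tells us that our connectedness hypothesis is equivalent to $G$ \emph{not} being a Frobenius group, which in turn is equivalent to the existence of a \emph{fixer} outside $F$, that is, an element $g\in G\setminus F$ with $C_F(g)\neq 1$ (otherwise $C_G(f)\le F$ for every $1\neq f\in F$ and $G$ would be Frobenius with kernel $F$).

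Call $x$ a \emph{fixer} if $C_F(x)\neq 1$. A fixer commutes with some $1\neq w\in F$, so $\langle x,w\rangle$ is abelian and $d(x,F\setminus\{1\})\le 1$; fixers are therefore settled. For a non-fixer $x$ I hunt for a neighbour that is a fixer. If some nontrivial power $x^{j}$ is a fixer we are done, since $x^{j}$ is adjacent to $x$. If no power of $x$ is a fixer, then $\langle x\rangle$ acts fixed-point-freely on $F$; I observe this forces $x$ to be a $\pi(F)'$-element, for if a prime $q\mid |F|$ divided $o(x)$ then the $q$-component $x_{q}$ would lie in a Sylow $q$-subgroup $Q\ge O_q(G)=F_q$, and since $F_q\lhd Q$ meets $Z(Q)$ nontrivially, $x_q$ would centralise a nontrivial element of $F_q$, making the power $x_q$ a fixer---a contradiction.

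The only remaining case, and the main obstacle, is a non-fixer $x$ of order coprime to $|F|$ whose cyclic group acts fixed-point-freely on $F$ (a genuine Frobenius complement on $F$); for these I must produce a fixer $t$ with $\langle x,t\rangle$ nilpotent. Here I exploit that $G$ is not Frobenius through a normal subgroup of fixers: picking a prime $q_{0}\mid |F|$ for which some fixer outside $F$ acts trivially on $F_{q_0}/\Phi(F_{q_0})$, the kernel $K$ of the action $G\to\mathrm{Aut}(F_{q_0}/\Phi(F_{q_0}))$ is a normal subgroup with $F\lneq K$ all of whose nontrivial elements are fixers. Because $G/F$ is abelian, $x$ centralises $K/F$, so $C_{K/F}(x)=K/F$; combined with $C_F(x)=1$, a coprime-action argument forces $C_{K}(x)\neq 1$, producing a nontrivial fixer in $C_G(x)$ and hence a fixer neighbour of $x$, whence $d(x,F\setminus\{1\})\le 2$. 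The delicate point---and the step I expect to demand the most care---is securing the coprimeness needed for this fixed-point argument, namely passing to the Hall $\pi(o(x))'$-subgroup of the abelian group $K/F$ and verifying, for a suitable choice of $q_{0}$, that it remains nontrivial; once this bookkeeping is in place the bound $d(x,F\setminus\{1\})\le 2$ holds for all $x$, and ${\rm diam}(\Gamma(G))\le 5$ follows.
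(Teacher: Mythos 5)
Your global strategy is the same as the paper's: since $F={\rm Fit}(G)$ is abelian, $F\setminus\{1\}$ is a clique, and it suffices to put every vertex within distance $2$ of it, giving $2+1+2=5$. Your treatment of the case $(o(x),|F|)\neq 1$ also matches the paper (this is Proposition \ref{prop:coprime}(2)). The divergence, and the problem, is in your main case. The step you yourself flag as ``delicate'' is not bookkeeping that can be put in place: it can genuinely fail. You need a normal subgroup $L$ with $F<L\leq K_{q_0}=C_G(\Omega_1(F_{q_0}))$ and $|L/F|$ coprime to $o(x)$, i.e.\ some $K_{q}/F$ with nontrivial Hall $\pi(o(x))'$-part. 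Consider $G=C_{35}\rtimes(C_2\times C_2)$, where $C_2\times C_2=\{1,s,t,st\}$ with $s$ inverting the Sylow $5$-subgroup of $C_{35}$ and centralizing the Sylow $7$-subgroup, and $t$ doing the opposite. Here $Z(G)=1$, ${\rm Fit}(G)=C_{35}$ is cyclic, $G$ is neither Frobenius nor $2$-Frobenius, so $\Gamma(G)$ is connected and the proposition applies. Take $x=st$: it has order $2$, coprime to $35$, is a non-fixer, and no power of it is a fixer, so it lands in your main case. But $K_5/F\cong C_2$ and $K_7/F\cong C_2$ are the only candidates, and both are $2$-groups, so their Hall $2'$-parts are trivial and your $L$ does not exist. (The conclusion still holds for this $x$ --- $s\in C_G(x)$ is a fixer --- but your coprime-action mechanism cannot see it, precisely because the fixer available has order not coprime to $o(x)$.) Without coprimality the identity $C_{K/F}(x)=C_K(x)F/F$ that your argument rests on is unavailable, so the main case is genuinely open in your write-up.

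The paper closes exactly this case by a different device: for a prime $p\mid o(x)$ and $P\in{\rm Syl}_p(G)$ containing $x_p$, abelianness of $G/F$ makes $PF$ normal, so the Frattini argument gives $G=N_G(P)F$. If $N_G(P)\cap F\neq 1$, then both $P$ and $N_G(P)\cap F$ lie in ${\rm Fit}(N_G(P))$, so $x\sim x_p\sim f$ for any $1\neq f\in N_G(P)\cap F$; if $N_G(P)\cap F=1$, then $G$ is cyclic-by-abelian and Proposition \ref{prop:cyclic-by-ab} already gives diameter at most $4$ (this is what happens in the example above). If you want to rescue your line of argument, you would need to add a separate treatment of the case where every $K_q/F$ is a $\pi(o(x))$-group, and the natural way to do that is essentially the Sylow/Frattini argument just described.
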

    
\begin{proof}
Fix an element $x \in G\setminus \{ 1 \}$ and write $F = {\rm Fit} (G)$.  It is sufficient to prove that there exists an element $f \in F \setminus \{ 1 \}$ such that $x$ is connected to $f$ in at most $2$ steps.

If $(o(x), |F|) \neq 1$, then by Proposition \ref{prop:coprime} (2) the result follows.  Suppose $(o(x), |F|)=1$, and let $p$ be a prime that divides $o(x)$.  Denote with $x_p$, the $p$-component of $x$. Obviously $x$ and $x_p$ are adjacent in $\Gamma(G)$.  Let $P$ be a Sylow $p$-subgroup of $G$ such that $x_p \in P$.  Notice that $G/F$ is isomorphic to a subgroup of ${\rm Aut} (F)$ which is abelian because $F$ is cyclic; therefore $G/F$ is abelian.

Thus, $FP$ is normal in $G$ and by the Frattini argument, it follows that $G = N_G(P)F$.  If $N_G (P) \cap F = \{ 1 \}$, then $G$ has a cyclic-by-abelian factorization, and by Proposition \ref{prop:cyclic-by-ab} the diameter of $\Gamma (G)$ is at most $4$.  Suppose $N_G(P) \cap F \neq \{ 1 \}$.  Notice that $N_G(P) \cap F$ is normal in $N_G (P)$, and thus, $N_G (P) \cap F \subseteq {\rm Fit} (N_G(P))$.  Moreover, $P \subseteq {\rm Fit} (N_G(P))$. Therefore, there exists $f \in F \setminus \{ 1 \}$ such that $\langle x_p, f \rangle$ is nilpotent. The conclusion is now clear.
\end{proof}

We now show that if $G$ is a $\{ p,q \}$-group, then $\Gamma (G)$ has diameter at most $6$.

\begin{thm}\label{thm:pq}
Let $G$ be a non-nilpotent $\{p, q\}$-group with trivial center. If $\Gamma (G)$ is connected, then ${\rm diam} (\Gamma (G)) \leq 6$.
\end{thm}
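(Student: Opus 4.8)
The plan is to split into two cases according to which primes divide $|{\rm Fit}(G)|$. Write $F = {\rm Fit}(G) = O_p(G) \times O_q(G)$; this is nontrivial because $G$ is solvable (Burnside's $p^aq^b$-theorem) and non-nilpotent, so both $p$ and $q$ divide $|G|$. Since $Z(G)=1$ forces $\hyp(G)=1$, the vertex set of $\Gamma(G)$ is $G\setminus\{1\}$. If both $O_p(G)$ and $O_q(G)$ are nontrivial, then $\pi(G)=\{p,q\}=\pi(F)$, and Proposition~\ref{prop:coprime}(3) immediately yields ${\rm diam}(\Gamma(G))\le 5$. Thus the substance lies in the case where, say, $O_q(G)=1$, so that $F=O_p(G)$ is a nontrivial $p$-group (the opposite case is symmetric).

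In that case I would take $T$ to be the set of nontrivial $p$-elements of $G$. Since $F=O_p(G)$ lies in every Sylow $p$-subgroup, each $a\in T$ is adjacent to every $f\in F\setminus\{1\}$ (they generate a $p$-group), so any two elements of $T$ are joined through a fixed $f\in F\setminus\{1\}$; hence $T$ has diameter at most $2$ in $\Gamma(G)$. Consequently, once I show that every vertex lies within distance $2$ of $T$, the triangle inequality gives ${\rm diam}(\Gamma(G))\le 2+2+2=6$. The distance-$2$ claim is routine for every vertex whose order is divisible by $p$ (it is adjacent to its nontrivial $p$-component, which lies in $T$) and for every $q$-element $z$ with $C_F(z)\ne 1$ or with a nontrivial $p$-element in $C_G(z)$ (it is adjacent to such an element of $T$).

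The only remaining vertices are the \emph{troublesome} $q$-elements $z$, namely those with $C_F(z)=1$ and $C_G(z)$ a $q$-group. For such a $z$ I would use that its Sylow $q$-subgroup $Q$ is a clique in $\Gamma(G)$, so $z$ is adjacent to every other element of $Q$; it then suffices to exhibit one $y\in Q\setminus\{1\}$ that is \emph{not} troublesome, for then $d(z,y)=1$ and $d(y,T)\le 1$. Call $Q$ \emph{bad} if every $y\in Q\setminus\{1\}$ is troublesome. Ruling out bad Sylow $q$-subgroups is the crux, and the main obstacle. The mechanism I expect to work is this: the two defining conditions are conjugation-invariant (using $F\trianglelefteq G$), so if one Sylow $q$-subgroup is bad then all of them are, whence every $q$-element $y$ has $C_G(y)$ a $q$-group. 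But if $C_G(y)$ is a $q$-group, then any neighbour $v$ of $y$ is forced to be a $q$-element: writing $v=v_pv_q$, inside the nilpotent group $\langle y,v\rangle$ the $p$-part $v_p$ commutes with the $q$-element $y$, so $v_p\in C_G(y)$ and hence $v_p=1$. Thus the set of nontrivial $q$-elements would be a union of connected components disjoint from $F\setminus\{1\}$, contradicting the connectivity of $\Gamma(G)$.

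Therefore no Sylow $q$-subgroup is bad, every troublesome $z$ satisfies $d(z,T)\le 2$, and combining this with the earlier cases shows that every vertex lies within distance $2$ of $T$; the bound ${\rm diam}(\Gamma(G))\le 6$ follows. I expect the conjugation-invariance step and the reduction to a connectivity contradiction to be the delicate points, while the rest is bookkeeping with $p$- and $q$-components together with the fact that $O_p(G)$ meets every Sylow $p$-subgroup.
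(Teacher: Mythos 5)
Your proposal is correct, and in the decisive case it takes a genuinely different route from the paper's proof. Both arguments begin the same way: the case $\pi({\rm Fit}(G))=\{p,q\}$ is settled by Proposition~\ref{prop:coprime}(3), one reduces to $F={\rm Fit}(G)=O_p(G)$, and the vertices of order divisible by $p$, as well as the $q$-elements whose centralizer contains a nontrivial $p$-element, are immediately placed within distance $2$ of $F$. The divergence is in the hard case of a $q$-element $z$ with $C_G(z)$ a $q$-group. The paper handles it structurally: it shows that $FQ$ is a Frobenius group, passes to $K/F=O_q(G/F)$ with $C_{G/F}(K/F)\le K/F$, and splits on the parity of $q$, invoking the cyclic or generalized quaternion structure of Frobenius complements, the Frattini argument, and the hypothesis that $G$ is not $2$-Frobenius, down to identifying $G/F\cong SL_2(3)$ in the residual $q=2$ subcase. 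You replace all of this with a short global argument: troublesomeness is preserved under conjugation, so if one Sylow $q$-subgroup consisted entirely of troublesome elements then every nontrivial $q$-element would have a $q$-group centralizer; the $p$-part/$q$-part computation inside a nilpotent two-generator subgroup then shows every neighbor of a $q$-element is again a $q$-element, so the nontrivial $q$-elements would form a union of connected components disjoint from $F\setminus\{1\}$, contradicting connectivity. Your bookkeeping also differs slightly---you route paths through the set $T$ of nontrivial $p$-elements, which has diameter at most $2$ and lies within distance $2$ of every vertex, while the paper routes through the clique $F\setminus\{1\}$ with every vertex within distance $3$ of it---but both yield the bound $6$. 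Your version is shorter, more elementary, and sidesteps the delicate case analysis at the end of the paper's proof; the paper's version extracts more structural information about $G$ (essentially pinning down $G/F$ in the extremal situation), which your connectivity contradiction deliberately avoids.
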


\begin{proof}
It is sufficient to show that for any element $x \in G \setminus \{ 1 \}$ the distance between $x$ and any element of $F = {\rm Fit}(G)$ is at most $3$.   If $\pi (F) = \pi (G)$, then by Proposition \ref{prop:coprime} (3) the result follows. Therefore, without loss of generality, we may assume that $q$ does not divide $|F|$, and so,  $F$ is a $p$-group.

Consider an element $x \in G \setminus \{ 1 \}$ and $P \in {\rm Syl}_p (G)$.  Assume first that $P$ is normal in $G$; that is, $P = O_p (G) = F$.  If $p$ divides $o(x)$, then $x$ is adjacent in $\Gamma (G)$ to its $p$-component, say $x_p$, which lies in $F$.  If $p$ does not divide $o(x)$, then $x$ is a $q$-element and there exists $Q \in {\rm Syl}_q(G)$ such that $x \in Q$.  Moreover, notice that $G = FQ$ which is not a Frobenius group due to the fact that $\Gamma (G)$ is connected.  Thus, there exist elements $w \in Q \setminus \{ 1 \}$ and $f \in F\setminus \{ 1 \}$ such that $[w,f] = 1$. Therefore, $x$ is connected to an element of $F$ in at most $2$ steps.

Assume now that $O_p(G) = F < P$. If $p$ divides $o(x)$, then $x$ is adjacent to its $p$-component, say $x_p$, which is adjacent to a nonidentity element of $F$.  If $p$ does not divide $o(x)$, then $x$ is a $q$-element. If $p$ divides $|C_G (x)|$ then there exists an element $w \in C_G(x)$ such that $w$ is a $p$-element, and so, $x$ is connected to an element of $F$ in at most $2$ steps. Therefore, we may assume that $p$ does not divide $|C_G (x)|$, and thus, that there exists $Q \in {\rm Syl}_q (G)$ such that $C_G (x) \leq Q$. If there exist elements $f \in F \setminus \{ 1\}$ and $y \in Q \setminus \{ 1 \}$ such that $[f,y]=1$, then $x$ is connected to $f$ in at most $2$ steps. Thus, we may assume for every $f \in F \setminus \{ 1 \}$ and for every $y \in Q \setminus \{ 1 \}$ that we have $[f,y] \neq 1$. Now, we wish to show that $FQ$ is a Frobenius group.  Let $v \in Q \setminus \{ 1 \}$ and $z \in C_{FQ}( v)$. There exist $f \in F$ and $g \in Q$ such that $z=fg$. We have
$$
1 = [z,v] = [fg,v] = [f,v]^g[g,v].
$$
Notice that $[g,v] \in Q$, $[f,v] = f^{-1} f^v \in F$ and so also $[f,v]^g \in F$. Therefore, $[f,v] = 1$ which yields to $f = 1$ and $z \in Q$. Thus, $C_{FQ}(v) \leq Q$ and $FQ$ is a Frobenius group.
Denote $K/F = O_q (G/F)$. Obviously $K/F$ is a subgroup of $FQ/F$. By Theorem 9.3.1 of \cite{Robinson}, we have $C_G(K/F)\leq K$, and thus, $C_{G/F} (K/F) \leq K/F$.
 
If $q$ is odd then the fact that $FQ$ is a Frobenius group yields that $Q$ is cyclic and $FQ/F \cong Q$ is contained in $C_{G/F} (K/F)$, therefore $C_{G/F} (K/F) \leq K/F \leq FQ/F \leq C_{G/F} (K/F)$, which implies that $FQ=K$ is normal in $G$.  Obviously $Q$ is a $q$-Sylow subgroup of $FQ$; so therefore by the Frattini argument, it follows that $G = N_G(Q) FQ = N_G(Q)F$.  Since $\Gamma(G)$ is connected, $G$ is not a $2$-Frobenius group, and so, $G/F$ is not a Frobenius group.  Thus $N_G(Q)$ is not a Frobenius group, so there exist elements $y \in C_{N_G(Q)}(x)\setminus \{1\}$ and $g \in C_{N_G(Q)}(y)\setminus Q$, with $o(g)=p$  such that for every $w \in F$ we have $x \sim y \sim g \sim w$.
 
Therefore $x$ is connected to any nonidentity element of $F$ in at most $3$ steps.  If $q=2$, then we may assume that $Q$ is not cyclic, otherwise we can argue as before.  Thus, the fact that $FQ$ is a Frobenius group yields that $Q$ is generalized quaternion.  Again 
$$C_{G/F}(K/F) \leq K/F;$$ 
moreover $\displaystyle \frac{G/F}{C_{G/F}(K/F)}$ is isomorphic to a subgroup of ${\rm Aut} (K/F)$.  Suppose that $K/F$ is cyclic or generalized quaternion of order at least $16$, it follows that ${\rm Aut} (K/F)$ is a $2$-group and $P/F \leq C_{G/F}(K/F) \leq K/F$, therefore $P/F=1$ and so $P = F$ which is a contradiction.  Then $Q\cong Q_8$ and $K/F \cong Q$.  Since ${\rm Aut} (Q_8)$ is the symmetric group of degree $4$, it follows that $p=3$ and $G/QF \cong C_3$ and so $G/F \cong SL_2(3)$.  Therefore, there exist $u \in F$ and $v \in O_2(G)$ such that $x \sim v \sim u$. 
\end{proof}

\section*{Acknowledgements}
\noindent This work was partially supported by the National Group for Algebraic and Geometric Structures, and their Applications  (GNSAGA -- INdAM). Funded by the European Union - Next Generation EU, Missione 4 Componente 1 CUP B53D23009410006, PRIN 2022- 2022PSTWLB - Group Theory and Applications.  The third author wishes to thank the Department of Mathematics of the University of Salerno, where much of this work was carried out, for the excellent hospitality.  The fourth  author wishes to thank the Department of Mathematical Sciences of Kent State  University, where a portion of this work was completed, for the excellent hospitality.

\bibliographystyle{plain}

\end{document}